\newcommand{\Q}{\mathbb{Q}}
\def\p{\mathfrak{p}}
\newcommand{\<}{\leq}
\def\>{\geq}
\def\e{\epsilon}
\def\ve{\varepsilon}
\newcommand{\noi}{\noindent}
\newcommand{\sbst}{\subseteq}
\newcommand{\map}{\rightarrow}
\newcommand{\tbf}{\textbf}
\newcommand{\tit}{\textit}
\newcommand{\lrd}{\lfloor}
\newcommand{\rrd}{\rfloor}
\newcommand{\lru}{\lceil}
\newcommand{\rru}{\rceil}
\def\A#1{\textbf{A}_{#1}}
\def\inclusion{\hookrightarrow}
\def\surjective{\twoheadrightarrow}
\newcommand{\blank}{\underline{\hskip 6pt}}
\newtheorem{theorem}{Theorem}[section]
\newtheorem{lemma}[theorem]{Lemma}
\newtheorem{proposition}[theorem]{Proposition}
\newtheorem{corollary}[theorem]{Corollary}
\newtheorem{definition}[theorem]{Definition}
\newtheorem*{mainthma}{Theorem A} 
\newtheorem*{mainthmb}{Theorem B}
\theoremstyle{remark}
\newtheorem{remark}[theorem]{Remark}
\title{On Strongly $F$-Regular Inversion of Adjunction}
\author{Omprokash Das}
\address{Department of Mathematics\\
University of Utah\\
155 S 1400 E\\
Salt Lake City, Utah 84112\\ \textit{das@math.utah.edu}}
\date{}
\begin{document}
\thanks{The author was partially supported by the FRG grant $\text{DMS-}\# 1265261$.}
\keywords{Algebraic Geometry, Birational Geometry, Positive Characteristic, F-Singularities, Inversion of Adjunction, Strongly F-regular, Sharply F-Pure, Purely F-regular, KLT, PLT}
\maketitle

\begin{abstract}
In this article we give two independent proofs of the positive characteristic analog of the log terminal inversion of adjunction. We show that for a pair $(X, S+B)$ in characteristic $p>0$, if $(S^n, B_{S^n})$ is strongly $F$-regular, then $S$ is normal and $(X, S+B)$ is purely $F$-regular near $S$. We also answer affirmatively an open question about the equality of $F$-Different and Different.
\end{abstract}

\tableofcontents

\section{Introduction}
In characteristic $0$ it is well known that if $(X, S+B)$ is a pair where $\lrd S+B\rrd=S$ is irreducible and reduced, then $(X, S+B)$ is plt near $S$ if and only if $(S^n, B_{S^n})$ is klt, where $S^n\to S$ is the normalization of $S$ and $K_{S^n}+B_{S^n}=(K_X+S+B)|_{S^n}$ is defined by adjunction. The proof follows from the resolution of singularities and the relative Kawamata-Viehweg vanishing theorem. In characteristic $p>0$ and in the higher dimension $(\text{dim}>3)$ the existence of the resolution of singularities is not known and the Kawamata-Viehweg vanishing theorem is known to fail, so we can not expect a similar proof here. In this article we give two independent proofs of the characteristic $p>0$ analog of the \tit{`Log terminal inversion of adjunction'} mentioned above. We prove the following theorem.

\begin{mainthma}[Theorem \ref{t-inv-adj}, Corollary \ref{c-inv-adj}] Let $(X, S+B)$ be a pair where $X$ is a normal variety, $S+B\>0$ is a $\Q$-divisor, $K_X+S+B$ is $\Q$-Cartier and $S=\lrd S+B \rrd$ is reduced and irreducible. Let $\nu:S^n\to S$ be the normalization and write $(K_X+S+B)|_{S^n}=K_{S^n}+B_{S^n}$. If $(S^n, B_{S^n})$ is strongly $F$-regular then $S$ is normal, furthermore $S$ is a unique \tit{center of sharp $F$-purity} of $(X, S+B)$ in a neighborhood of $S$ and $(X, S+B)$ is purely $F$-regular near $S$.	
\end{mainthma}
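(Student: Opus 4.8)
The plan is to handle all three conclusions through one mechanism: an $F$-adjunction identification between the twisted $(p^e-1)$-linear maps on $X$ that are compatible with $S$ and those on the normalization $S^n$, together with the principle that a strongly $F$-regular pair carries no proper uniformly compatible ideal. Since every assertion is local near $S$, I would first localize $X=\mathrm{Spec}\,R$ at a point of $S$ and let $\p\sbst R$ be the prime defining $S$. For each $e>0$ I set $\mathscr{C}^e:=\mathrm{Hom}_R\!\big(F^e_*R(\lru (p^e-1)(S+B)\rru),R\big)$, the module of $(S+B)$-twisted $p^{-e}$-linear maps, and write $\mathscr{C}^e_{S^n}$ for its analogue on $S^n$ built from $B_{S^n}$. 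In this language, pure $F$-regularity of $(X,S+B)$ near $S$ is exactly the statement that for every $c$ not vanishing on $S$ some $\varphi\in\mathscr{C}^e$ sends $F^e_*c\mapsto 1$, and strong $F$-regularity of $(S^n,B_{S^n})$ is the same statement on $S^n$ for every nonzero $\bar c$.

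The technical heart is to produce, via Grothendieck duality for $R\surjective R/\p$ and the trace of Frobenius, a restriction map sending a $\varphi\in\mathscr{C}^e$ with $\varphi(F^e_*\p)\sbst\p$ to a $p^{-e}$-linear map $\bar\varphi$ on $S^n$, and to prove that this map is \emph{surjective} onto $\mathscr{C}^e_{S^n}$ as computed with the geometric different $B_{S^n}$. I expect this to be the main obstacle, and it splits into two parts: the Fedder-type surjectivity itself, and the identification of the splitting-theoretic $F$-different with the adjunction-theoretic different $B_{S^n}$. Only once these coincide may strong $F$-regularity of $(S^n,B_{S^n})$ be transported to $X$; this is precisely the equality of $F$-Different and Different announced in the abstract, which I would establish before anything else.

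Granting the surjective restriction, I would argue the remaining points as follows. For normality, note that the conductor of $\nu\colon S^n\map S$ is a uniformly $B_{S^n}$-compatible ideal; were $S$ non-normal, it would contribute a component of coefficient $\>1$ to the different $B_{S^n}$ at the generic point of the non-normal locus, which is incompatible with strong $F$-regularity (this forces all coefficients $<1$). Hence the conductor is trivial near $S$ and $S=S^n$ is normal. For pure $F$-regularity, take $0\ne c\in R$ not vanishing on $S$ and restrict to $\bar c=c|_S\ne 0$; strong $F$-regularity of $(S,B_S)$ gives $e$ and $\bar\varphi\in\mathscr{C}^e_{S^n}$ with $\bar\varphi(F^e_*\bar c)=1$, which I lift to $\varphi\in\mathscr{C}^e$ by surjectivity. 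Then $\varphi(F^e_*c)\equiv 1\pmod{\p}$, so $\varphi(F^e_*c)$ is a unit and $\varphi$ splits the required map, giving pure $F$-regularity near $S$.

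Finally, $S$ is a center of sharp $F$-purity because the lifts $\varphi$ may be taken $\p$-compatible. For uniqueness near $S$, any center of sharp $F$-purity is defined by a uniformly $(S+B)$-compatible ideal: pure $F$-regularity (that is, strong $F$-regularity away from $S$) excludes centers meeting the neighborhood off $S$, while any center strictly inside $S$ would restrict, under the same adjunction applied to compatible ideals, to a proper uniformly $B_S$-compatible ideal of the strongly $F$-regular pair $(S,B_S)$ — of which there are none. Thus $S$ is the unique center of sharp $F$-purity near $S$, which completes the plan.
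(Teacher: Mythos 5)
Your plan is, in substance, the paper's second proof (Corollary \ref{c-inv-adj}): establish the equality $F\text{-Diff}_{S^n}(\Delta)=\text{Diff}_{S^n}(\Delta)$ first, use $F$-adjunction to identify $\mathfrak{p}$-compatible $p^{-e}$-linear maps on $X$ with maps on $S^n$ twisted by $B_{S^n}$, kill the conductor because a strongly $F$-regular pair admits no proper nonzero uniformly compatible ideal, and then transport splittings. The only real difference is cosmetic: where you lift splittings directly through the surjective restriction on $\mathscr{C}^e$, the paper invokes the restriction theorem $\tau_b(X,\nsubseteq Q; S+B)|_S=\tau_b(S;B_S)$ of Blickle--Schwede--Takagi--Zhang together with Takagi's characterization of pure $F$-regularity; these are two packagings of the same mechanism. (The paper also gives an entirely separate ``geometric'' proof, Theorem \ref{t-inv-adj}, via a birational model $f:Y\to X$ on which the strict transform of $S$ is its normalization, an auxiliary boundary $\Xi$, and Serre vanishing; your proposal does not touch that route.)

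There is one genuine gap you must close before any of your machinery applies: the theorem only assumes $K_X+S+B$ is $\Q$-Cartier, with no restriction on the index, whereas the whole $F$-adjunction formalism --- the module $\mathscr{C}^e$ corresponding to the divisor $S+B$, the $F$-different, and the restriction map to $S^n$ --- requires $(p^e-1)(K_X+S+B)$ to be Cartier for some $e$. Since the index-freeness is precisely the point of the theorem (it is what makes it usable for flips), you cannot simply assume it. The paper reduces to the prime-to-$p$ case by perturbation: choose $D'\>0$ with $D'-K_X$ Cartier and $S'\sim S$ not containing $S$, set $D=\frac{S'+B+D'}{p^e-1}$ for $e\gg 0$, so that $K_X+S+B+D$ has index prime to $p$, $\lrd S+B+D\rrd=S$, and $(S^n,B_{S^n}+D|_{S^n})$ is still strongly $F$-regular. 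Add this step explicitly. Separately, your justification of normality via ``a component of coefficient $\>1$ in the different'' is looser than needed: the clean argument is that the conductor of $S^n\to S$ is a nonzero uniformly $F$-compatible ideal for $(S^n,B_{S^n})$ (Schwede's $F$-adjunction through the normalization), hence must be the unit ideal by strong $F$-regularity, which is exactly how the paper concludes $S=S^n$.
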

The first proof (Theorem \ref{t-inv-adj}) is a geometric proof based on characteristic $0$ type of techniques and the second one (Corollary \ref{c-inv-adj}) is by characteristic $p>0$ techniques.\\

We also answer affirmatively an open question about the equality of the $F$-Different and the Different asked by Schwede in \cite{Schwede-F-adj}. Our second proof (Corollary \ref{c-inv-adj}) of the inversion of adjunction is an application of the equality of these two Differents combined with various known but non trivial results in characteristic $p>0$ (see \cite{Schwede-F-adj}, \cite{BSTZ} and \cite{Takagi-plt}). Our proof of this equality also closes the gap in Takagi's proof of the equality of restriction of certain generalizations of test ideal sheaves (see \cite[Theorem 4.4]{Takagi-plt}), where it is assumed that these two Differents coincide.\\
We prove the following theorem.
\begin{mainthmb}[Theorem \ref{t-F-Different}]
Let $(X, S+\Delta\>0)$ be a pair, where $X$ is a $F$-finite normal excellent scheme of pure dimension over a field $k$ of characteristic $p>0$ and $S+\Delta\>0$ is a $\Q$-divisor on $X$ such that $(p^e-1)(K_X+S+\Delta)$ is Cartier for some $e>0$. Also assume that $S$ is a reduced Weil divisor and $S\wedge \Delta=0$. Then the $F$-Different,  $F\text{-Diff}_{S^n}(\Delta)$ is equal to the Different, $\text{Diff}_{S^n}(\Delta)$, i.e., $F\text{-Diff}_{S^n}(\Delta)=\text{Diff}_{S^n}(\Delta)$, where $S^n\to S$ is the normalization morphism.  	
\end{mainthmb}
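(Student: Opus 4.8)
The plan is to reduce the global comparison of the two $\Q$-divisors to a coefficient-by-coefficient check at the codimension one points of $S^n$, and then to carry out an explicit local computation there. Both $F\text{-Diff}_{S^n}(\Delta)$ and $\text{Diff}_{S^n}(\Delta)$ are effective $\Q$-divisors on the normal scheme $S^n$, both are compatible with localization, and both satisfy the adjunction relation $\nu^*(K_X+S+\Delta)=K_{S^n}+(\cdot)$ in the same $\Q$-linear equivalence class; hence their difference is a $\Q$-divisor that is $\Q$-linearly equivalent to zero. This is only a sanity check and not yet enough, so I would argue instead that to prove the honest equality of divisors it suffices to show $\text{coeff}_P\, F\text{-Diff}_{S^n}(\Delta)=\text{coeff}_P\,\text{Diff}_{S^n}(\Delta)$ for every prime divisor $P$ on $S^n$. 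Since both notions localize, I can replace $X$ by the localization (or completion) at the codimension two point $x\in X$ that is the image of the generic point of $P$, reducing to the situation where $X=\text{Spec}\,R$ for a two-dimensional $F$-finite normal local ring, $S$ is a (possibly non-normal) prime divisor through the closed point, $\nu:S^n\to S$ is the normalization of a curve, and $P$ is a closed point of $S^n$, so that $\O_{S^n,P}$ is a DVR.

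The first case to dispatch is the one where $X$ is regular and $S$ is regular at the relevant point. There ordinary adjunction $\omega_X(S)|_S\cong\omega_S$ holds cleanly, and I would check that Schwede's construction of the $F$-Different --- restricting the $p^{-e}$-linear map $\phi\in\text{Hom}_{\O_X}(F^e_*\O_X(\lru (p^e-1)(S+\Delta) \rru),\O_X)$ corresponding to $S+\Delta$ to a $p^{-e}$-linear map on $S$ --- is governed by exactly this same adjunction isomorphism via Grothendieck--Serre duality. Thus in the regular case both Differents equal $\text{coeff}_P(\nu^*\Delta)$ and they agree. The content of the theorem is therefore concentrated at points where $X$ is singular or, more seriously, where $S$ is non-normal.

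For the general local computation I would write out both sides explicitly. On the classical side, $\text{coeff}_P\,\text{Diff}_{S^n}(0)$ decomposes into a contribution measuring the failure of $K_X+S$ to be Cartier (the index along $S$) and a contribution from the conductor of $\O_S\inclusion\O_{S^n}$ recording the non-normality of $S$, with $\nu^*\Delta$ adding the $\Delta$-part; this is the classical description of Shokurov's different via the dualizing sheaf and the different ideal of the ring extension. On the Frobenius side, I would trace through Schwede's restriction procedure: the reflexive identity coming from $(p^e-1)(K_X+S+\Delta)$ Cartier converts $\phi$ into the datum defining $F\text{-Diff}$, and the passage $F^e_*\O_X\to F^e_*\O_S\to F^e_*\O_{S^n}$ is controlled by the trace/residue maps for $S\inclusion X$ and for $S^n\to S$. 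The key point is that these trace maps are precisely what the classical different ideal is built from, so after identifying the relevant generators of the DVR $\O_{S^n,P}$ the two coefficients should coincide.

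The hard part will be the non-normal locus of $S$: showing that the Frobenius restriction, followed by pulling back to the normalization, reproduces exactly the conductor contribution of the classical different. I expect to handle this by passing to the completion $\widehat{\O_{X,x}}$, decomposing $S^n$ into a product of DVRs, and checking directly that the dual of the restricted $p^{-e}$-linear map generates the same fractional ideal as the one cut out by the different ideal of $\O_S\inclusion\O_{S^n}$ --- that is, that Frobenius and the normalization trace are compatible in the precise sense needed. Once the coefficients agree at every $P$, effectivity together with the common $\Q$-linear equivalence class yields $F\text{-Diff}_{S^n}(\Delta)=\text{Diff}_{S^n}(\Delta)$, completing the proof.
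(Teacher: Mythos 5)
Your opening reduction is the same as the paper's: both Differents are determined by their coefficients along prime divisors of $S^n$, and these can be computed after localizing $X$ at the codimension two point lying under each such prime divisor, so the whole theorem becomes a statement about excellent surface germs. Your treatment of the case where $X$ and $S$ are regular is also fine (and is essentially \cite[7.2]{Schwede-F-adj}, which was already known). The problem is what comes next: the entire content of the theorem lives at the points where $X$ is singular or $S$ is non-normal, and there your argument is a statement of intent rather than a proof. You write that you ``expect to handle'' the non-normal locus by completing and ``checking directly that the dual of the restricted $p^{-e}$-linear map generates the same fractional ideal as the one cut out by the different ideal of $\mathcal{O}_S\hookrightarrow\mathcal{O}_{S^n}$.'' That check \emph{is} the theorem; nothing in the proposal indicates how to carry it out. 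Worse, the classical fact you want to lean on --- that Shokurov's Different decomposes into an index contribution plus a conductor contribution of $\mathcal{O}_S\subseteq\mathcal{O}_{S^n}$ --- is not how the Different is defined (it is defined via crepant pullback to a log resolution, as in \cite[4.2, 4.7]{Kollar}), and the standard route to any such conductor description passes through a log resolution of the surface germ. So your plan implicitly requires the very tool you never invoke.

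That tool is exactly what the paper uses to avoid all direct computation at singular points: resolution of singularities for excellent surfaces (Abhyankar, Hironaka, Lipman). After localizing, one takes a log resolution $\pi:Y\to X$ of the surface germ with $\widetilde{S}=\pi^{-1}_*S$ smooth, so that $\widetilde{S}\to S$ is the normalization and $\widetilde{S}+\Delta_Y$ has simple normal crossing support, where $K_Y+\widetilde{S}+\Delta_Y=\pi^*(K_X+S+\Delta)$. On the classical side, Koll\'ar's formula gives $\mathrm{Diff}_{S^n}(\Delta)=\Delta_Y|_{\widetilde{S}}$ by definition of crepant pullback. On the Frobenius side, the $p^{-e}$-linear map $\varphi$ attached to $S+\Delta$ transforms to a map $\varphi_Y$ on $Y$ agreeing with $\varphi$ where $\pi$ is an isomorphism; $\widetilde{S}$ is $\varphi_Y$-compatible, the induced map on $\widetilde{S}=S^n$ computes $F\text{-}\mathrm{Diff}_{S^n}(\Delta)=F\text{-}\mathrm{Diff}_{\widetilde{S}}(\Delta_Y)$, and the already-known SNC case gives $F\text{-}\mathrm{Diff}_{\widetilde{S}}(\Delta_Y)=\Delta_Y|_{\widetilde{S}}$. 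If you want to salvage your direct approach, you must actually produce the completed local computation comparing the Frobenius trace with the conductor; as written, the proposal defers the only genuinely hard step. (A small additional remark: your closing appeal to ``effectivity together with the common $\mathbb{Q}$-linear equivalence class'' is unnecessary and proves nothing extra --- two effective $\mathbb{Q}$-divisors in the same $\mathbb{Q}$-linear equivalence class need not coincide; coefficient-wise agreement at every prime divisor is already the desired equality.)
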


The log terminal inversion of adjunction for surfaces was known for a long time in characteristic $p>0$, it follows from the exact same proof of the characteristic $0$ case since the resolution of singularities exists for surfaces in characteristic $p>0$ and also the relative Kawamata-Viehweg vanishing theorem holds. In \cite[4.1]{HX}, Hacon and Xu proved the Theorem A using the resolution of singularities, so in particular their proof establishes the result for $\text{\rm dim }X\<3$. Our first proof of the inversion of adjunction closely follows the techniques used in \cite{HX}.\\

When $X$ is a $\Q$-Gorenstein variety with $p \nmid \text{index}(K_X)$ and $S$ is a Cartier divisor, Hara and Watanabe showed in \cite[4.9]{HW} that, if $(S, 0)$ is strongly $F$-regular then $(X, S)$ is purely $F$-regular near $S$. In \cite[5.2]{Schwede-F-adj}, Schwede proved a characteristic $p>0$ analog of `\tit{Log canonical inversion of adjunction}', namely he showed that if the index of $K_X+S+B$ is not divisible by $p$ and $S$ is normal, then $(X, S+B)$ is \tit{sharply $F$-pure} near $S$ if and only if $(S, B|_{S})$ is sharply $F$-pure, where $B|_S$ is the `\tit{$F$-Different}'.   A `\tit{Weakly $F$-regular inversion of adjunction}' is proved in \cite[4]{AKM}, it says that if $X$ is a $\Q$-Gorenstein variety and $S$ is a Cartier divisor then $(S, 0)$ \tit{weakly $F$-regular} implies $X$ is weakly $F$-regular near $S$. In his paper \cite{Singh}, Singh showed that the `\tit{Weakly $F$-regular inversion of adjunction}' fails if $X$ is not $\Q$-Gorenstein.\\

The $F$-Different was first defined formally by Schwede in \cite{Schwede-F-adj} and the Different was defined originally by Shokurov. In \cite{Schwede-F-adj} Schwede proved the equality of the $F$-Different and Different for divisors which are Cartier in codimension $2$ (see \cite[7.2]{Schwede-F-adj}) and raised the question whether the equality holds in general, which we answer affirmatively in this article.\\   
  
Our result on the inversion of adjunction is interesting for various reasons. Firstly while running the \tit{MMP},  even if we start with a divisor whose index is not divisible by $p$, after doing a \tit{flip} we don't know what happens to the index of its strict transform. Since our hypothesis does not impose any restriction on the divisibility of the index, it can be used to construct flips (see \cite{HX}). Finally our first proof (Theorem \ref{t-inv-adj}) has a `geometric' flavor compared to the `Frobenius methods' used in the second proof (Corollary \ref{c-inv-adj}), and we hope that it may inspire further progress in this area.\\

\tbf{Acknowledgements}. The author would like to thank his advisor Professor Christopher Hacon for suggesting this problem and many useful discussions. He would also like to thank Professor Karl Schwede for many useful suggestions and insightful discussions during his visit at the Penn State University. The author would also like to thank the referee for carefully reading the draft and sharing his valuable comments, and also suggesting a more direct proof of the Proposition \ref{p-Q-Cartier}.\\

\section{Preliminaries}
\subsection{Notation and Conventions} We work over an algebraically closed field $k$ of characteristic $p>0$. We will use the standard notations from \cite{KM}, \cite{Har}, \cite{Laz1} and \cite{Laz2}.\\

\begin{definition}\label{d1}
	We say that a noetherian ring $R$ of characteristic $p>0$ is $F$-finite if $F_*R$ is finitely generated as a $R$-module.
\end{definition}

\begin{definition}
	Let $A$ be a normal domain with quotient field $K(A)$ and $D$, a $\Q$-Weil divisor on $X=\text{Spec }A$. We define the $A$-module $A(D)$ as
	\[A(D)=\{f\in K(A) : D+\text{\rm div}(f)\>0\}\cup \{0\}. \]	
\end{definition}

\begin{definition}\cite{HW}, \cite{Hara}, \cite{HR}, \cite{Takagi-inv-adj} \cite{Schwede-cpurity}
	Let $A$ be a $F$-finite normal domain of characteristic $p>0$ and $\Delta$ an effective $\Q$-Weil divisor on $X=\text{Spec }A$.\\
	$(1)$ We say that the pair $(X, \Delta)$ is strongly $F$-regular if for every non-zero $c\in A$, there exists $e>0$ such that the composition 
	$$\xymatrix{A\ar[r]^{F^e} & F^e_*A\ar[r]^{F^e_*(c.\blank)} & F^e_*A\ar@{^{(}->}[r]^-\iota & F^e_*A(\lru (p^e-1)\Delta\rru)}$$ splits as a map of $A$-modules.\\
	$(2)$ $(X, \Delta)$ is purely $F$-regular if for every non-zero $c\in A$ which is not in any minimal prime ideal of $A(-\lrd\Delta\rrd)\sbst A$, there exists $e>0$ such that the composition $$\xymatrix{A\ar[r]^{F^e} & F^e_*A\ar[r]^{F^e_*(c.\blank)} & F^e_*A\ar@{^{(}->}[r]^-\iota & F^e_*A(\lru (p^e-1)\Delta\rru)}$$ splits as a map of $A$-modules.\\
	$(3)$ $(X, \Delta)$ is sharply $F$-pure, if there exists an $e>0$ such that the composition $$\xymatrix{A\ar[r]^{F^e} & F^e_*A\ar@{^{(}->}[r]^-\iota & F^e_*A(\lru (p^e-1)\Delta\rru)}$$ splits as a map of $A$-modules.
\end{definition}

\begin{remark}
	Our definition of \tit{purely $F$-regular} is the same as \tit{divisorially $F$-regular} defined in \cite{HW}.
\end{remark}

\begin{definition}
	Let $(X, \Delta\>0)$ be a pair where $X$ is a normal variety and $L_{g, \Delta}=(1-p^g)(K_X+\Delta)$ an integral Cartier divisor for some $g>0$. Then by \tit{Grothendieck Trace map}, we get a morphism\\
	$$\phi^g: F^g_*\mathcal{O}_X(L_{g, \Delta}) \to \mathcal{O}_X.$$
	Following \cite{Patak} we define the \tit{non-$F$-pure ideal} $\sigma(X, \Delta)$ of $(X, \Delta)$ to be:
	$$\sigma(X, \Delta)=\bigcap_{e\> 0}\phi^{eg}\left(F^{eg}_*\mathcal{O}_X(L_{eg, \Delta})\right).$$	
\end{definition}

\begin{remark}
	The above intersection is a descending intersection. By \cite[Remark 2.9]{Schwede-can-lin}, this intersection stabilizes, i.e.
	$$\phi^{eg}\left(F^{eg}_*\mathcal{O}_X(L_{eg, \Delta})\right)=\sigma(X, \Delta) \text{ for all } e\gg 0.$$
\end{remark}

\begin{remark}
	If $K_X+\Delta$ is $\Q$-Cartier with index not divisible by $p$, then $(X, \Delta)$ is \tit{sharply $F$-pure} if and only if $\sigma(X, \Delta)=\mathcal{O}_X$.
\end{remark}

\subsection{Resolution of Singularities} After \cite{Abhyankar} and \cite{Hironaka}, we know that the resolution of singularities exists for \tit{excellent surfaces} in characteristic $p>0$, see also \cite{Lipman}. We will also use the existence of \tit{minimal resolution}.\\

\begin{theorem}[Existence of minimal resolution]
Let $X$ be an excellent surface. Then there exists a unique resolution $f:Y\to X$, i.e., $f$ is a proper birational morphism and $Y$ is non-singular, such that any other resolution $g:Z\to X$ of $X$ factors through $f$.	
\end{theorem}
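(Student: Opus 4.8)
The plan is to construct the minimal resolution by contracting exceptional curves of the first kind out of an arbitrary resolution, and then to establish the universal (hence uniqueness) property using the structure theory of birational morphisms between regular excellent surfaces.

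First, for existence, I would invoke the resolution of singularities for excellent surfaces (Abhyankar, Hironaka, Lipman) cited above to obtain some resolution $g_0 \colon Y_0 \to X$. The key local input is Castelnuovo's contractibility criterion, valid for excellent surfaces: an exceptional curve of the first kind ($E\cong \mathbb{P}^1$ with self-intersection $-1$, a $(-1)$-curve) can be contracted to yield a regular surface carrying a proper birational morphism to $X$, still a resolution. Whenever the exceptional locus of the current resolution contains such a $(-1)$-curve, contract it. Each contraction drops the number of irreducible exceptional divisors by one (equivalently the relative Picard rank $\rho(Y_i/X)$), so the process terminates after finitely many steps at a resolution $f\colon Y\to X$ whose exceptional locus contains no $(-1)$-curve.

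Next, for the universal property, I would show that an arbitrary resolution $g\colon Z\to X$ factors through $f\colon Y\to X$. Here the crucial structural fact, available for regular excellent surfaces (Lipman; Shafarevich-type factorization), is that every proper birational morphism of regular surfaces is a finite composition of point blow-ups. I would resolve the birational map $\phi=f^{-1}\circ g\colon Z\dashrightarrow Y$ by a common regular surface $W$ with proper birational morphisms $p\colon W\to Z$ and $q\colon W\to Y$, and argue that $\phi$ is in fact a morphism. Suppose not; then $W\neq Y$, so $q$ is a nontrivial composition of blow-ups and $W$ carries a $q$-exceptional $(-1)$-curve $E$. The heart of the argument is that $E$ must also be $p$-exceptional: if $p(E)$ were a curve in $Z$, then comparing $f\bigl(q(E)\bigr)=g\bigl(p(E)\bigr)$ on both sides and applying the projection/negativity estimate on intersection numbers would force a $(-1)$-curve down on $Y$, contradicting minimality. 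Since $E$ is then contracted by both $p$ and $q$, one can descend and induct on the number of blow-ups in $q$ to conclude that $q$ factors through $p$, i.e.\ $\phi$ is a morphism $h\colon Z\to Y$ with $f\circ h=g$. Uniqueness follows at once: two resolutions without exceptional $(-1)$-curves each dominate the other by a morphism over $X$, and two mutually dominating proper birational morphisms of regular surfaces are inverse isomorphisms.

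The main obstacle I anticipate is the descent step, namely rigorously showing that a $q$-exceptional $(-1)$-curve on the common resolution $W$ is necessarily $p$-exceptional, so that the indeterminacy of $Z\dashrightarrow Y$ never produces a new $(-1)$-curve on the already-minimal $Y$. Making this precise requires the intersection theory of Lipman on resolutions of excellent surfaces (negative-definiteness of the exceptional intersection form together with the projection formula) rather than the projective intersection theory over a field used in the classical case, so the real care lies in transporting the curve-contraction and negativity arguments to the excellent setting.
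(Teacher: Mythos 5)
The paper offers no argument of its own here---it simply refers to Lipman \cite{Lipman2} and Koll\'ar---so your sketch is being measured against the standard proof contained in those references. Your existence half follows that standard route correctly: take any resolution (which exists by Abhyankar--Hironaka--Lipman), repeatedly contract $(-1)$-curves in the exceptional locus using Castelnuovo's criterion in its excellent-surface form, and terminate because the number of exceptional prime divisors drops at each step.

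The universal-property half, however, has the roles of $p\colon W\to Z$ and $q\colon W\to Y$ interchanged at the decisive step, and the claim you isolate as the ``heart of the argument'' is not the one you need. You propose to take a $q$-exceptional $(-1)$-curve $E$ on $W$ and show it is $p$-exceptional; but if $p(E)$ is a curve, the projection-formula/negativity estimate only produces a $(-1)$-curve on $Z$ (note that $q(E)$ is a \emph{point} of $Y$, so nothing is produced on $Y$), and $Z$ carries no minimality hypothesis, so no contradiction results. Moreover, even granting that claim, contracting the curves exceptional for both maps would terminate with $q$ an isomorphism, yielding a morphism $Y\to Z$---the wrong direction; to factor $g$ through $f$ you need every $p$-exceptional curve to be $q$-exceptional, not the converse. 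The correct descent step runs the other way: choose $W$ minimal among resolutions of the indeterminacy of $\phi$, and let $E$ be the exceptional $(-1)$-curve of the last blow-up occurring in $p$. If $q$ contracts $E$, then both $p$ and $q$ factor through the contraction of $E$, contradicting minimality of $W$. If $q$ does not contract $E$, then $C=q(E)$ is an $f$-exceptional curve on $Y$ with $C^2\ge E^2=-1$, so negative definiteness of the exceptional intersection form forces $C^2=-1$, whence $q$ is an isomorphism along $E$ and $C$ is a $(-1)$-curve on the minimal $Y$---a contradiction. Hence $p$ is an isomorphism and $\phi$ is a morphism; uniqueness then follows as you state. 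With this correction your outline does match the cited proof.
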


\begin{proof}
	For a proof see \cite[27.3]{Lipman2}. Also consult \cite{Lipman}, \cite[2.25]{Kollar} and \cite[2.16]{Kollar2}.
\end{proof}

\begin{remark}
	The regular surface $Y$ in the theorem above is an excellent surface and not necessarily a variety. Also $Y$ does not contain any $(-1)$-curves over $X$ and $K_Y$ is \tit{nef} relative to $X$.
\end{remark}

We will use the properties of Weil divisors and reflexive sheaves throughout this article. For the convenience of the reader, we record some useful properties of reflexive sheaves that we will use without comment.

\begin{proposition}\label{rp}\cite{Har} and \cite[Proposition 1.11, Theorem 1.12]{Har2} Let $X=\text{Spec }R$ be a normal affine variety and $M$ and $N$ finitely generated $R$-modules. Then:\\
	
\noi $(1)$ $M$ is reflexive if and only if $M$ is $S_2$.\\
	$(2)$ $\text{Hom}_R(M, R)=M^\vee$ is reflexive.\\
	$(3)$ If $R$ is of characteristic $p>0$ and $F$-finite (See Definition \eqref{d1}), then $M$ is reflexive if and only if $F^e_*M$ is reflexive, where $F^e: X\to X$ be  the $e$-iterated Frobenius morphism.\\
	$(4)$ If $N$ is reflexive, then $\text{Hom}_R(M, N)$ is also reflexive.\\
	$(5)$ Suppose $M$ is reflexive and $Z\sbst X$ be a closed subset of codimension $2$. Set $U=X-Z$ and let $i: U\to X$ be the inclusion. Then $i_*(M|_U)\cong M^{\vee\vee}\cong M$.\\
	$(6)$ With the notations as in $(5)$, the restriction map to $U$ induces an equivalence of categories from reflexive coherent sheaves on $X$ to the reflexive coherent sheaves on $U$.\\
	$(7)$ If $f:\mathscr{F}\to \mathscr{G}$ is a morphism between coherent sheaves on $X$, then there exists a natural morphism $f':\mathscr{F}^{\vee\vee}\to \mathscr{G}^{\vee\vee}$ such that $f'|_U=f|_U$ for some open set $U\sbst X$. In particular, if $\mathscr{G}$ is reflexive, i.e., $\mathscr{G}=\mathscr{G}^{\vee\vee}$, then $f:\mathscr{F}\to \mathscr{G}$ factors through $f':\mathscr{F}^{\vee\vee}\to \mathscr{G}$.
\end{proposition}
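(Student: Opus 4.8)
The plan is to reduce the entire proposition to the single structural fact recorded in part $(1)$, namely that over a normal domain a finitely generated module is reflexive if and only if it satisfies Serre's condition $S_2$, together with the companion description of the reflexive hull as $M^{\vee\vee}=\bigcap_{\operatorname{ht}\p=1} M_\p$, the intersection being taken inside $M\otimes_R K(R)$ after discarding torsion. I would first establish $(1)$ by the standard argument: the natural map $M\map M^{\vee\vee}$ is an isomorphism in codimension $\leq 1$ (since $R_\p$ is a discrete valuation ring for $\operatorname{ht}\p=1$ and $M^\vee$ is torsion-free), so a reflexive $M$ must coincide with the intersection of its height-one localizations, which is precisely the $S_2$ condition; conversely an $S_2$ module equals that intersection and hence equals its bidual. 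This is the content of \cite[Theorem 1.12]{Har2}, which I would cite, recording the intersection formula as the main tool for the remaining parts.

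From this the ``closure'' properties follow quickly, all via the principle that $\operatorname{Hom}_R(M,N)$ is $S_2$ whenever its target $N$ is, which one checks by localizing: $\operatorname{Hom}$ commutes with localization for finitely generated $M$, and a depth estimate shows the bound $\operatorname{depth}\geq\min(2,\operatorname{depth}N)$. Part $(4)$ is exactly this statement; part $(2)$ is the special case $N=R$, using that $R$ is normal and therefore $S_2$, so that any dual is reflexive by $(1)$. Part $(5)$ is the geometric reformulation of the intersection formula: for reflexive $M$, the complement $U=X-Z$ contains every codimension-one point (as $Z$ has codimension $2$), so $i_*(M|_U)$ glues exactly the data $\bigcap_{\operatorname{ht}\p=1}M_\p=M^{\vee\vee}\cong M$. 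Part $(6)$ then follows formally: restriction to $U$ and $\mathscr{G}\mapsto(i_*\mathscr{G})^{\vee\vee}$ are mutually inverse on reflexive sheaves, the unit and counit being the isomorphisms supplied by $(5)$. Finally $(7)$ is pure functoriality of the bidual: applying $(-)^{\vee\vee}$ to $f$ produces $f'=f^{\vee\vee}$, which agrees with $f$ on the big open set where $\mathscr{F}$ and $\mathscr{G}$ are already reflexive, and when $\mathscr{G}$ is reflexive the composite $\mathscr{F}\map\mathscr{F}^{\vee\vee}\xrightarrow{f'}\mathscr{G}^{\vee\vee}=\mathscr{G}$ recovers the asserted factorization of $f$.

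The one part genuinely different in flavour, and the step I expect to require the most care, is $(3)$: that in characteristic $p>0$ with $R$ being $F$-finite, $M$ is reflexive if and only if $F^e_*M$ is. Here I would again invoke $(1)$ and argue that $S_2$-ness is transported across the $e$-iterated Frobenius. The key inputs are that $F$-finiteness guarantees $F^e_*M$ is finitely generated over $R$ and that $F^e$ is a \emph{finite} morphism which is the \emph{identity} on the underlying topological space (the preimage of a prime $\p$ under $r\mapsto r^{p^e}$ is $\p$ itself), so codimension of every closed subset is preserved and depth may be computed before or after pushforward. The subtlety I would watch is the bookkeeping between the two $R$-module structures: $S_2$-ness of $F^e_*M$ must be verified as an $R$-module via the finite homeomorphism $F^e$, and it is exactly here that $F$-finiteness and the homeomorphism property do the work; once that is in place, $M$ is $S_2$ precisely when $F^e_*M$ is, and $(1)$ converts this into the reflexivity equivalence. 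All of these are standard, and in the final writeup I would cite \cite{Har2} and \cite{Hara} for the precise references rather than reproving them.
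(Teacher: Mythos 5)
The paper offers no proof of this proposition at all --- it is a catalogue of standard facts justified entirely by the citations to \cite{Har} and \cite[Proposition 1.11, Theorem 1.12]{Har2} --- and your plan, which reduces every part to the $S_2$ criterion of $(1)$ together with the description of the bidual as $\bigcap_{\text{ht}\,\mathfrak{p}=1}M_{\mathfrak{p}}$ and then defers to the same references, is the standard derivation and is entirely consistent with what those sources prove (including the depth-transfer argument across the finite homeomorphism $F^e$ for part $(3)$, which is the right way to handle the only nonclassical item). The single point worth pinning down in a final writeup is the convention behind ``reflexive $\Leftrightarrow$ $S_2$'' in $(1)$: the equivalence requires measuring depth against $\dim\mathcal{O}_{X,x}$ rather than against the dimension of the support, so that modules with torsion are excluded --- your phrase ``after discarding torsion'' gestures at this but the bookkeeping should be made explicit, since parts $(5)$--$(7)$ silently rely on it.
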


\begin{proposition} \cite[Proposition 2.9]{Har2} and  \cite[Remark 2.9]{Har3} Let $X$ be a normal variety and $D$ be a Weil divisor on $X$. Then there is a one-to-one correspondence between the effective divisors linearly equivalent to $D$ and the non-zero sections $s\in \Gamma(X, \mathcal{O}_X(D))$ modulo multiplication by units in $H^0(X, \mathcal{O}_X)$.
	
\end{proposition}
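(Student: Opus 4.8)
The plan is to make the correspondence explicit and then verify it is a well-defined bijection, with normality of $X$ doing the essential work only at the very last step. Recall that giving a non-zero global section $s \in \Gamma(X, \mathcal{O}_X(D))$ is the same as giving a non-zero rational function $f \in K(X)$ with $D + \text{div}(f) \geq 0$; this is immediate from the definition of $\mathcal{O}_X(D)$, whose local sections on $\text{Spec } A$ are exactly the module $A(D)$. So I would first define the forward map $s \mapsto D_s$, where for the function $f$ attached to $s$ we put $D_s := D + \text{div}(f)$. By construction $D_s \geq 0$, and $D_s - D = \text{div}(f)$ is principal, so $D_s$ is effective and linearly equivalent to $D$. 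For the inverse, given an effective $E$ with $E \sim D$, I would choose $f \in K(X)$ with $E = D + \text{div}(f)$; the hypothesis $E \geq 0$ says precisely that $f \in \Gamma(X, \mathcal{O}_X(D))$, returning a non-zero section. These two assignments are formally inverse to one another, so the only real content is to pin down the ambiguity in the choice of $f$.

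The key step is to identify the equivalence relation on sections that corresponds to equality of divisors. Two non-zero sections $s, s'$ with attached functions $f, g$ satisfy $D_s = D_{s'}$ iff $\text{div}(f) = \text{div}(g)$, i.e. iff $\text{div}(f/g) = 0$. Here normality enters: setting $h := f/g$, the condition $\text{div}(h) = 0$ means $h$ has no zeros or poles along any prime divisor. Since $X$ is normal we have $A = \bigcap_{\mathrm{ht}\,\p = 1} A_{\p}$ (Serre's criterion, equivalently reflexivity of $\mathcal{O}_X$ together with Proposition \ref{rp}(5)), so $\text{div}(h) \geq 0$ already forces $h$ to be regular; applying the same reasoning to $h^{-1} = g/f$, whose divisor is also zero, shows $h \in \Gamma(X, \mathcal{O}_X)^{*} = H^0(X, \mathcal{O}_X)^{*}$. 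Conversely, any global unit $u$ has $\text{div}(u) = 0$, so $s$ and $u \cdot s$ give the same effective divisor. Thus the fibers of the forward map are exactly the orbits under multiplication by units, and the induced map on the quotient is the asserted bijection.

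The only point requiring genuine care is the implication $\text{div}(h) = 0 \implies h \in H^0(X, \mathcal{O}_X)^{*}$, and this is precisely where normality is indispensable: for a non-normal $X$ a rational function trivial in codimension one need not be regular, and the correspondence would break down. Everything else is formal bookkeeping with principal divisors and the definition of $\mathcal{O}_X(D)$, so I expect no further obstacle.
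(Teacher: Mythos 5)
Your argument is correct and is essentially the standard proof: the paper itself offers no proof of this proposition, citing only \cite[Proposition 2.9]{Har2} and \cite[Remark 2.9]{Har3}, and the argument given there is exactly yours (section $\leftrightarrow$ rational function $f$ with $D+\mathrm{div}(f)\geq 0$, with normality used to show that $\mathrm{div}(h)=0$ forces $h$ to be a global unit). You have correctly isolated the one non-formal step and placed the normality hypothesis where it is actually needed, so there is nothing to add.
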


\section{Some Lemmas and Propositions}
\begin{lemma}\label{l-connected}
	Let $(X, \Delta\>0)$ be a pair, where $X$ is a normal excellent surface and $K_X+\Delta$ is a $\Q$-Cartier divisor. Let $f:(Y, D)\to (X, \Delta)$ be a log resolution where $K_Y+D=f^*(K_X+\Delta)$. Write $D=\sum d_iD_i, A=\sum_{i:d_i<1}d_iD_i$ and $F=\sum_{i:d_i\>1}d_iD_i$.
Then $\text{Supp }F=\text{Supp }\lrd F\rrd$ is connected in a neighborhood of any fiber of $f$.\\ 
\end{lemma}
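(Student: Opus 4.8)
The equality $\text{Supp}\,F=\text{Supp}\,\lrd F\rrd$ is immediate, since every coefficient of $F$ is $\>1$ and so $\lrd F\rrd$ is supported on exactly the components carrying $F$; the real content is the connectedness of this support along the fibres. This is the Koll\'ar--Shokurov connectedness lemma, and the plan is to run its usual proof, the only characteristic-$p$ input being a relative vanishing theorem, which is available for excellent surfaces. First I would introduce the integral divisor $N:=\lru -D\rru=-\lrd D\rrd=-\lrd A\rrd-\lrd F\rrd$ and record the identities $N+\lrd F\rrd=-\lrd A\rrd$ and $N=K_Y+\lru -(K_Y+D)\rru$. Since $K_Y+D=f^*(K_X+\Delta)$, the divisor $-(K_Y+D)=-f^*(K_X+\Delta)$ is $f$-numerically trivial, hence $f$-nef, and it is automatically $f$-big because $f$ is birational, so the generic fibre is a point and bigness on it is vacuous. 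The relative Kawamata--Viehweg vanishing theorem for surfaces then yields $R^1 f_*\O_Y(N)=0$.

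Next I would analyze $-\lrd A\rrd$. Because $\Delta\>0$, the strict transforms of the components of $\Delta$ occur in $A$ with coefficient in $[0,1)$ and contribute nothing to $\lrd A\rrd$, while the only components of $A$ with $d_i<0$ are $f$-exceptional; hence $-\lrd A\rrd$ is effective and $f$-exceptional, so $f_*\O_Y(-\lrd A\rrd)=\O_X$. Twisting the ideal-sheaf sequence of $\lrd F\rrd$ by $\O_Y(-\lrd A\rrd)$ and using $N+\lrd F\rrd=-\lrd A\rrd$ gives
\[ 0\to \O_Y(N)\to \O_Y(-\lrd A\rrd)\to \mathscr{G}\to 0,\qquad \mathscr{G}:=\O_Y(-\lrd A\rrd)\big|_{\lrd F\rrd}. \]
Pushing forward and invoking $R^1 f_*\O_Y(N)=0$ produces a surjection $\O_X=f_*\O_Y(-\lrd A\rrd)\surjective f_*\mathscr{G}$, where $\mathscr{G}$ is an invertible sheaf on $\lrd F\rrd$, supported exactly on $\text{Supp}\,F$.

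Finally I would deduce connectedness. Fix $x\in X$; the assertion is vacuous unless $x\in f(\text{Supp}\,F)$. Suppose $\text{Supp}\,F$ is disconnected in a neighbourhood of $f^{-1}(x)$. By the theorem on formal functions the connected components of $\text{Supp}\,F\cap f^{-1}(x)$ furnish orthogonal idempotents in $(f_*\O_{\lrd F\rrd})^{\wedge}_x$, and since $\mathscr{G}$ is a faithful invertible $\O_{\lrd F\rrd}$-module these split $(f_*\mathscr{G})^{\wedge}_x$ as a direct sum of $\>2$ nonzero finitely generated $\widehat{\O}_{X,x}$-modules; by Nakayama its residue fibre then has dimension $\>2$. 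But the surjection off the local ring $\O_{X,x}$ makes $(f_*\mathscr{G})_x$ cyclic, so its fibre is $\<1$-dimensional, a contradiction. Hence $\text{Supp}\,F\cap f^{-1}(x)$ is connected, and semicontinuity of the number of connected components along $f$ gives the statement in a neighbourhood of the fibre.

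The step I expect to be the main obstacle is securing the relative vanishing $R^1 f_*\O_Y(N)=0$: in characteristic $p$ this cannot be taken from the classical Kawamata--Viehweg theorem, and it is exactly here that the restriction to surfaces is essential, since what is genuinely available is the relative vanishing for birational morphisms of excellent surfaces. A secondary point needing care is the bookkeeping that $-\lrd A\rrd$ is effective and $f$-exceptional and that the twist by $\O_Y(-\lrd A\rrd)$, being invertible, does not disturb the idempotent count in the final step.
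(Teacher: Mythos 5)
Your proposal is correct and follows essentially the same route as the paper: the divisor you call $N$ is exactly the paper's $\lru-A\rru-\lrd F\rrd$, the vanishing $R^1f_*\mathcal{O}_Y(N)=0$ is obtained from the same characteristic-free relative vanishing for birational morphisms of excellent surfaces, and the conclusion is drawn from the same surjection $\mathcal{O}_X=f_*\mathcal{O}_Y(-\lrd A\rrd)\twoheadrightarrow f_*\bigl(\mathcal{O}_Y(-\lrd A\rrd)|_{\lrd F\rrd}\bigr)$ together with the indecomposability of a cyclic module over a local ring. The only cosmetic difference is that the paper writes the nef part of the decomposition as $-(K_Y+D)+f^{-1}_*(\{\Delta\})$ rather than $-(K_Y+D)$ itself, which does not change the argument.
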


\begin{proof}
	By definition
	\begin{multline*} 
\lru-A\rru-\lrd F\rrd=K_Y-(K_Y+D)+\{A\}+\{F\}=K_Y+(-(K_Y+D)+f^{-1}_*(\{\Delta\}))\\
+(\{A\}-f^{-1}_*(\{\Delta\}))+\{F\}.
\end{multline*}
Now $\lru-A\rru-\lrd F\rrd$ is an integral Cartier divisor and \\
$-(K_Y+D)+f^{-1}_*(\{\Delta\})\equiv_f f^{-1}_*(\{\Delta\})$ is $f$-nef, therefore by \cite[2.2.5]{KK} (also see \cite[10.4]{Kollar}) we have 
$$R^1f_*\mathcal{O}_Y(\lru-A\rru-\lrd F\rrd)=0.$$

Applying $f_*$ to the exact sequence 
$$0\map \mathcal{O}_Y(\lru-A\rru-\lrd F\rrd)\map \mathcal{O}_Y(\lru-A\rru)\map \mathcal{O}_{\lrd F\rrd}(\lru-A\rru)\map 0$$
we obtain that 
\begin{equation}\label{e-re-connected}
 f_*\mathcal{O}_Y(\lru-A\rru)\map f_*\mathcal{O}_{\lrd F\rrd}(\lru-A\rru) \text{ is surjective.}
\end{equation}
Since $\lru-A\rru$ is $f$-exceptional and effective, $f_*\mathcal{O}_Y(\lru-A\rru)=\mathcal{O}_X$. Suppose by contradiction that $\lrd F\rrd$ has at least two connected components $\lrd F\rrd=F_1\cup F_2$ in a neighborhood of $g^{-1}(x)$ for some $x\in X$. Then
$$f_*\mathcal{O}_{\lrd F\rrd}\left(\lru-A\rru\right)_{(x)}\cong f_*\mathcal{O}_{F_1}\left(\lru-A\rru\right)_{(x)}\oplus f_*\mathcal{O}_{F_2}\left(\lru-A\rru\right)_{(x)}\text{,}$$
and neither of these summands is zero. Thus $f_*\mathcal{O}_{\lrd F\rrd}(\lru-A\rru)_{(x)}$ cannot be the quotient of $\mathcal{O}_{x, X}\cong f_*\mathcal{O}_Y(\lru-A\rru)_{(x)}$, a contradiction.
\end{proof}

\begin{corollary}\label{c-normality}
Let $(X, S+B\>0)$ be a pair such that $X$ is a normal excellent scheme of dimension $n$, $K_X+S+B$ is $\Q$-Cartier and $\lrd S+B\rrd=S$ is reduced and irreducible. Further assume that $\nu: S^n\to S$ is the normalization of $S$ and $(S^n, B_{S^n})$ is klt, where $K_{S^n}+B_{S^n}=(K_X+S+B)|_{S^n}$. Then $S$ is normal in codimension $1$. 
\end{corollary}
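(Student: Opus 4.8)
The plan is to reduce the statement to a question about normal excellent surfaces and then feed the connectedness of $\lrd F\rrd$ from Lemma~\ref{l-connected} into the cohomology computation that is already carried out in its proof. Normality in codimension $1$ is the assertion that $\mathcal{O}_{S,\eta}$ is regular — equivalently, since it is one dimensional, that $\nu$ is an isomorphism over $\eta$ — for every point $\eta\in S$ with $\dim\mathcal{O}_{S,\eta}=1$. Such an $\eta$ is a codimension $2$ point of $X$, so after replacing $X$ by $\mathrm{Spec}\,\mathcal{O}_{X,\eta}$ I may assume that $X$ is a normal excellent surface, that $S$ is an irreducible curve through the closed point $\eta$, and it suffices to prove $\mathcal{O}_S=\nu_*\mathcal{O}_{S^n}$ near $\eta$.

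First I would take a log resolution $f:(Y,D)\map(X,S+B)$ with $K_Y+D=f^*(K_X+S+B)$ and decompose $D=A+F$ as in Lemma~\ref{l-connected}. The strict transform $\tilde S$ of $S$ occurs in $D$ with coefficient $1$ because $S=\lrd S+B\rrd$, so $\tilde S\subseteq\mathrm{Supp}\,F$; moreover $\tilde S$ is smooth and $\tilde S\map S$ is proper and birational, so $\tilde S\cong S^n$ over $\eta$. Restricting $K_Y+D=f^*(K_X+S+B)$ to $\tilde S$ and applying adjunction on the smooth surface $Y$ identifies the different with the intersection divisor, $B_{S^n}=(D-\tilde S)|_{\tilde S}$. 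The hypothesis that $(S^n,B_{S^n})$ is klt then forces every coefficient of $B_{S^n}$ to be $<1$; consequently, in a neighbourhood of $f^{-1}(\eta)$, the curve $\tilde S$ cannot meet any component of $F$ other than itself, since such a component carries coefficient $\>1$ and would contribute a coefficient $\>1$ to $B_{S^n}$.

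Next I would invoke Lemma~\ref{l-connected}: $\mathrm{Supp}\,F=\mathrm{Supp}\,\lrd F\rrd$ is connected near $f^{-1}(\eta)$. Combined with the previous step, $\tilde S$ is disjoint from the remaining components of $\mathrm{Supp}\,F$, so connectedness leaves no room for any such components and yields $\lrd F\rrd=\tilde S$ in a neighbourhood of the fibre. In particular the vanishing established inside the proof of Lemma~\ref{l-connected}, namely $R^1f_*\mathcal{O}_Y(\lru-A\rru-\lrd F\rrd)=0$, now reads $R^1f_*\mathcal{O}_Y(\lru-A\rru-\tilde S)=0$ near $\eta$. Twisting the sequence $0\map\mathcal{O}_Y(-\tilde S)\map\mathcal{O}_Y\map\mathcal{O}_{\tilde S}\map 0$ by the effective $f$-exceptional divisor $\lru-A\rru$ and pushing forward then produces a surjection $\mathcal{O}_X=f_*\mathcal{O}_Y(\lru-A\rru)\twoheadrightarrow f_*\mathcal{O}_{\tilde S}(\lru-A\rru)$, using $f_*\mathcal{O}_Y(\lru-A\rru)=\mathcal{O}_X$ exactly as in the lemma.

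Finally I would extract normality by a squeeze. Since $\tilde S\cong S^n$ one has $f_*\mathcal{O}_{\tilde S}=\nu_*\mathcal{O}_{S^n}$, and the image of the restriction map $\mathcal{O}_X\map f_*\mathcal{O}_{\tilde S}(\lru-A\rru)$ consists precisely of pullbacks of sections of $\mathcal{O}_S$, hence equals $\mathcal{O}_S$. Thus $\mathcal{O}_S\subseteq\nu_*\mathcal{O}_{S^n}\subseteq f_*\mathcal{O}_{\tilde S}(\lru-A\rru)$ with the composite surjective, which forces $\mathcal{O}_S=\nu_*\mathcal{O}_{S^n}$ near $\eta$; therefore $\mathcal{O}_{S,\eta}$ is integrally closed, i.e. regular, and $S$ is normal in codimension $1$. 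I expect the main obstacle to be the middle pair of reductions: confirming that the klt different is genuinely computed by $(D-\tilde S)|_{\tilde S}$, and, above all, combining this with Lemma~\ref{l-connected} to force $\lrd F\rrd=\tilde S$ near the fibre, since it is exactly this identity that turns the lemma's vanishing into the vanishing $R^1f_*\mathcal{O}_Y(\lru-A\rru-\tilde S)=0$ needed above. Once that is in place the surjection and squeeze are formal.
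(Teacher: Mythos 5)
Your proposal is correct and follows essentially the same route as the paper: localize at a codimension~$2$ point of $X$ on $S$ to reduce to an excellent surface, use the klt hypothesis on $(S^n,B_{S^n})$ together with the connectedness statement of Lemma~\ref{l-connected} to force $\lrd F\rrd$ to equal the strict transform of $S$, and then use the surjection $f_*\mathcal{O}_Y(\lru-A\rru)\twoheadrightarrow f_*\mathcal{O}_{\lrd F\rrd}(\lru-A\rru)$ from the lemma's proof to squeeze $\mathcal{O}_S\twoheadrightarrow\nu_*\mathcal{O}_{S^n}$. The two steps you flag as the main obstacles (identifying the different with $(D-\tilde S)|_{\tilde S}$ and deducing $\lrd F\rrd=\tilde S$) are exactly the ones the paper carries out, and they go through as you describe.
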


\begin{proof} Let $\p\in X$ be a codimension $2$ point of $X$ contained in $S$, $X_{\p}=\text{Spec }\mathcal{O}_{X, \p}$ and $D_{\p}=S_{\p}+B_{\p}$ the restriction of $S+B$ to $X_{\p}$. Further assume that $g:(X', D') \to (X_{\p}, D_{\p})$ is a log resolution and let

\begin{equation}\label{e-log-res}
	K_{X'}+D'=g^*(K_{X_{\p}}+D_{\p}).
\end{equation}
Let $T$ be the strict transform of $S_{\p}$, then restricting both sides of the above equation to $T$ we get
\begin{equation}\label{e-restriction}
K_{T}+(D'-T)|_{T}=u^*(K_{S^n_{\p}}+B_{S^n_{\p}}) 
\end{equation}
where $u:T \to S^n_{\p}$ is the induced morphism.\\

Let $A=\sum_{i:d_i<1}d_iD'_i$ and $F=\sum_{i:d_i\>1}d_iD'_i$ be as in the lemma above where $A+F=D'$.\\ 
Since $(S^n, B_{S^n})$ is klt, from \eqref{e-restriction} we get $\lrd (D'-T)|_T\rrd\<0$. Thus if $\lrd F\rrd$ has another component say $T_1$, then $T\cap T_1=\emptyset$, but $g(T)\cap g(T_1)\neq \emptyset$, which is a contradiction by Lemma \ref{l-connected}. Hence $\lrd F\rrd=T$.\\ 

Now from \eqref{e-re-connected} we get that 
 
$$\mathcal{O}_{X_{\p}}\to g_*\mathcal{O}_T(\lru-A\rru) \text{ is surjective}.$$

But this map factors through $\mathcal{O}_{S_{\p}}$ and $g_*\mathcal{O}_{T}(\lru-A\rru)$ contains $\nu_*\mathcal{O}_{S^n_{\p}}$,  where $\nu:S^n\map S$ is the normalization morphism, hence $\mathcal{O}_{S_{\p}}\to \nu_*\mathcal{O}_{S^n_{\p}}$ is surjective and so $S_{\p}=S^n_{\p}$.\\	 
\end{proof}

\begin{lemma}[Inversion of Adjunction]\label{l-inv-adj}
	 With notations as in the proof of  Corollary \ref{c-normality} above, assume that $(S^n_{\p}, B_{S^n_{\p}})$ is klt, then $(X_{\p}, S_{\p}+B_{\p})$ is plt.	
\end{lemma}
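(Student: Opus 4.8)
The plan is to reduce the statement to a single discrepancy computation on the given log resolution $g:(X',D')\to (X_\p, D_\p)$ and to read off $\emph{plt}$ directly from the shape of $D'$ that was already extracted in the proof of Corollary \ref{c-normality}. Recall the standard criterion (see \cite{KM}): since $g$ is a log resolution, $(X_\p, S_\p+B_\p)$ is plt if and only if every $g$-exceptional divisor has discrepancy $>-1$; equivalently, writing $K_{X'}+D'=g^*(K_{X_\p}+D_\p)$ as in \eqref{e-log-res} and decomposing $D'=A+F$ as in the proof of Corollary \ref{c-normality}, the condition is that $\lfloor D'\rfloor$ carries no exceptional component and coincides with the strict transform of $\lfloor D_\p\rfloor = S_\p$. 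The point is that checking plt on one log resolution suffices, so no divisor over a higher model needs to be examined.

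The essential input is already available. Under the hypothesis that $(S^n_\p, B_{S^n_\p})$ is klt, the computation in the proof of Corollary \ref{c-normality} shows $\lfloor F\rfloor = T$, where $T$ is the strict transform of $S_\p$: restricting \eqref{e-log-res} to $T$ produces the adjunction identity \eqref{e-restriction}, klt forces $\lfloor (D'-T)|_T\rfloor\leq 0$, and the connectedness Lemma \ref{l-connected} then excludes any further component of $\lfloor F\rfloor$ meeting $T$ inside a fiber of $g$. I would then observe that $\lfloor F\rfloor = T$ in fact upgrades to $F=T$: the component $T$ occurs in $F=\sum_{d_i\geq 1}d_iD_i'$ with coefficient exactly $1$, while any other component of $F$ has coefficient $\geq 1$ and hence would survive in $\lfloor F\rfloor$. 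Since none does, the index set $\{i:d_i\geq 1\}$ is exactly $\{T\}$, so $F=T$ with coefficient $1$.

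To conclude, I would note that $A=\sum_{d_i<1}d_iD_i'$ has all coefficients strictly less than $1$: its components are the strict transform of $B_\p$, whose coefficients lie in $[0,1)$ because $S=\lfloor S+B\rfloor$ is reduced and irreducible, together with $g$-exceptional divisors contributing discrepancy coefficients $<1$. Hence $\lfloor D'\rfloor=\lfloor A+T\rfloor = T$, the non-exceptional strict transform of $S_\p$. In particular every $g$-exceptional divisor appears in $D'$ with coefficient $<1$, i.e. with discrepancy $>-1$, which by the criterion above is precisely the statement that $(X_\p, S_\p+B_\p)$ is plt.

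The only genuinely non-formal step is the identity $\lfloor F\rfloor=T$, which is carried over verbatim from Corollary \ref{c-normality} and rests on the relative vanishing packaged in Lemma \ref{l-connected} together with the klt restriction bound $\lfloor (D'-T)|_T\rfloor\leq 0$. I expect the main thing to verify carefully is the passage from $\lfloor F\rfloor=T$ to $F=T$ — that is, the exclusion of an exceptional component of coefficient exactly $1$, which would give a log canonical but non-plt center; this is exactly where the strict inequality in the klt hypothesis on $(S^n_\p, B_{S^n_\p})$ and the irreducibility of $S$ enter. Once that is in hand, the reduction of plt to a single log resolution via \cite{KM} makes the remainder purely bookkeeping.
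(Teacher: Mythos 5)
Your proof is correct and follows essentially the same route as the paper's: both use the klt hypothesis together with \eqref{e-restriction} to rule out any component of $F-T$ meeting $T$, invoke the connectedness Lemma \ref{l-connected} to conclude that $F-T$ is in fact empty, and then read off plt from the fact that every $g$-exceptional coefficient of $D'$ is $<1$. The only cosmetic difference is that the paper phrases the final criterion as $F'\cap g^{-1}(\p)=\emptyset$ rather than as $\lfloor D'\rfloor=T$ on the log resolution.
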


\begin{proof}
	Rewriting \eqref{e-restriction} as below 
	$$K_T= u^*(K_{S^n_{\p}}+B_{S^n_{\p}})-(A+F')|_T$$
where $F'=F-T$, we see that $(X_{\p}, S_{\p}+B_{\p})$ is plt if and only if $F'\cap f^{-1}(S_{\p})=\emptyset$ or equivalently $F'\cap f^{-1}(\p)=\emptyset$. Now $(S^n_{\p}, B_{S^n_{\p}})$ is klt, so $F'\cap T=\emptyset$, therefore by Lemma \ref{l-connected} it follows that $F'\cap f^{-1}(\p)=\emptyset$, this completes the proof.
\end{proof}

\begin{proposition}\label{p-Q-Cartier}
	With the same notations as in Lemma \ref{l-inv-adj}, if $(X_{\p}, S_{\p}+B_{\p})$ is plt then $X_{\p}$ is $\Q$-factorial. In particular for each Weil divisor $D$ on $X$ there is an open set $U\sbst X$ (depending on $D$) containing all codimension $1$ points of $S$, i.e. $\text{codim}_S (S-U)\> 2$ such that $D|_U$ is $\Q$-Cartier. 
\end{proposition}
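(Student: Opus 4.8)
The plan is to prove the statement in two stages: first establish that $X_{\p}$ is $\Q$-factorial, and then deduce the global statement about finding an open set $U$ over which an arbitrary Weil divisor becomes $\Q$-Cartier. Here $X_{\p} = \text{Spec } \mathcal{O}_{X, \p}$ is the local ring at a codimension $2$ point, hence a two-dimensional normal local ring, so $\Q$-factoriality is a statement about surface singularities. I would work on the minimal resolution $g : Y \to X_{\p}$, whose existence is guaranteed by the theorem on minimal resolutions for excellent surfaces recalled in the preliminaries.

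**Reducing $\Q$-factoriality to the intersection form.**

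The key point is that for a normal surface singularity, $\Q$-factoriality is equivalent to the negative-definiteness (hence non-degeneracy and invertibility over $\Q$) of the intersection matrix $(E_i \cdot E_j)$ of the exceptional curves $E_i$ of the minimal resolution. I would first recall this standard fact: given any Weil divisor $D$ on $X_{\p}$, take its strict transform $\widetilde{D}$ on $Y$; since the intersection matrix is negative definite, there is a unique $\Q$-combination $\sum a_i E_i$ of exceptional divisors with $(\widetilde{D} + \sum a_i E_i)\cdot E_j = 0$ for all $j$, and this numerically $g$-trivial $\Q$-divisor descends to a $\Q$-Cartier divisor on $X_{\p}$ whose pullback is $\widetilde{D} + \sum a_i E_i$; intersecting with a multiple of $D$ then exhibits $mD$ as Cartier. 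So everything reduces to showing the exceptional intersection form is negative definite. For excellent normal surfaces this negative-definiteness is automatic once the exceptional set is connected (Mumford/Lipman), so the real content is to use the $\text{plt}$ hypothesis.

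**Extracting negative-definiteness from $\text{plt}$.**

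This is where I expect the main obstacle to lie, and where the $\text{plt}$ assumption enters decisively. Since $(X_{\p}, S_{\p} + B_{\p})$ is $\text{plt}$, writing $K_Y + D' = g^*(K_{X_{\p}} + S_{\p} + B_{\p})$ as in the proof of Lemma \ref{l-inv-adj}, every exceptional divisor of $Y$ appears in $D'$ with coefficient strictly less than $1$, and by Lemma \ref{l-connected} together with Lemma \ref{l-inv-adj} the only component of $\lrd F \rrd$ meeting the fiber is the strict transform $T$ of $S_{\p}$. The standard fact for surface singularities is that $X_{\p}$ has rational singularities away from $S_{\p}$ and is in fact $\Q$-factorial precisely because the $\text{plt}$ (equivalently, numerically log terminal) condition forces the exceptional intersection form to be negative definite; indeed a connected exceptional configuration on an excellent surface always has negative semidefinite intersection form, and the $\text{plt}$ discrepancy bound $a(E_i) > -1$ rules out the degenerate directions. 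I would make this precise by noting that if the form were degenerate there would be a nonzero effective numerically $g$-trivial exceptional $\Z$-combination $Z = \sum n_i E_i$, and combining $Z \cdot g^*(K_{X_{\p}} + S_{\p} + B_{\p}) = 0$ with the discrepancy identity $K_Y + D' = g^*(\cdots)$ would contradict the $\text{plt}$ coefficient bounds; this gives negative definiteness and hence $\Q$-factoriality.

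**The global statement.**

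For the final assertion, let $D$ be any Weil divisor on $X$. The codimension $1$ points of $S$ are codimension $2$ points of $X$, and at each such point $\p$ the pair $(X_{\p}, S_{\p} + B_{\p})$ is $\text{plt}$ by Lemma \ref{l-inv-adj} (since $(S^n, B_{S^n})$ is klt at the corresponding point), so by the first part $X_{\p}$ is $\Q$-factorial; thus $D$ is $\Q$-Cartier at $\p$, meaning some power $\mathcal{O}_X(m_{\p} D)$ is locally free at $\p$. Since being $\Q$-Cartier is an open condition and there are only finitely many codimension $2$ points of $X$ lying on $S$ at which we might worry—more precisely, the locus where $D$ fails to be $\Q$-Cartier is a closed set not containing any codimension $1$ point of $S$—we may take $U$ to be the complement in $X$ of the support of that closed locus intersected with the non-$\Q$-Cartier locus, giving $\text{codim}_S(S - U) \geq 2$ and $D|_U$ $\Q$-Cartier. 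The only subtlety is that the index $m_{\p}$ may vary with $\p$, so $U$ is assembled by taking a single $m$ clearing denominators over the generic points of the (finitely many) relevant codimension-$2$ loci, which is why the open set $U$ is allowed to depend on $D$.
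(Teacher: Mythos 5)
Your reduction of $\Q$-factoriality to negative definiteness of the exceptional intersection form is where the argument breaks down. For the exceptional curves of a resolution of a normal surface singularity the intersection matrix is \emph{always} negative definite (Mumford's theorem, extended to excellent surfaces by Lipman); the negative semidefinite case with degenerate directions arises for fibers of fibrations, not for configurations contracted to a point by a birational morphism. So the step in which you use the plt hypothesis to ``rule out the degenerate directions'' establishes something that is automatic --- and, more importantly, negative definiteness does not imply $\Q$-factoriality. The descent step is the precise gap: given the Mumford numerical pullback $D^{*}=\widetilde{D}+\sum a_{i}E_{i}$ with $D^{*}\cdot E_{j}=0$ for all $j$, it is not true in general that some multiple of $D^{*}$ is the pullback of a Cartier divisor on $X_{\p}$. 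The standard counterexample is the cone over an elliptic curve: the exceptional curve has negative self-intersection, yet the local divisor class group contains a copy of $\mathrm{Pic}^{0}$ of the elliptic curve and is non-torsion, so the singularity is not $\Q$-factorial. The actual content needed is that $X_{\p}$ has a \emph{rational} singularity, for only then does Lipman's theory identify the divisor class group with the cokernel of the intersection matrix, which is finite by negative definiteness.

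This is exactly the ingredient the paper supplies and your proposal omits: plt implies $(X_{\p},0)$ is numerically klt, so on the minimal resolution one has $K_{Y}+\Delta_{Y}\equiv_{f}0$ with $\Delta_{Y}$ effective (Negativity lemma, since $K_{Y}$ is $f$-nef) and $\lrd\Delta_{Y}\rrd=0$; the vanishing theorem \cite[Theorem 6.2 (2)]{FT12}, valid in characteristic $p>0$, then gives $R^{1}f_{*}\mathcal{O}_{Y}=0$, so $X_{\p}$ is a rational singularity, and \cite[17.1]{Lipman2} yields finiteness of the divisor class group and hence $\Q$-factoriality. Your handling of the final globalization statement (spreading out from the finitely many relevant codimension-$2$ points and letting $U$ depend on $D$) is fine once the local statement is in place.
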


\begin{proof}
Since $(X_{\p}, S_{\p}+B_{\p})$ is plt, $(X_\p, 0)$ is numerically klt, by \cite[Corollary 4.2]{KM}. Let $f: Y\to X_\p$ be the minimal resolution of $X_\p$, and $\Delta_Y$, the $f$-exceptional $\Q$-divisor satisfying the following relation as in \cite[4.1]{KM}:
\[K_Y+\Delta_Y\equiv_f 0. \]
Since $K_Y$  is nef, $\Delta_Y$ is effective by the Negativity lemma. Also, the coefficients of $\Delta_Y$ are strictly less than $1$, since $(X_\p, 0)$ is numerically klt. Therefore $\lrd \Delta_Y\rrd=0$. Then by \cite[Theorem 6.2 (2)]{FT12}, $R^1f_*\mathcal{O}_X=0$. Hence $X_\p$ is a rational surface. Then by \cite[17.1]{Lipman2}, the Weil divisor class group WDiv$(X_\p)$ of $X_\p$ is finite. In particular, $X_\p$ is $\Q$-factorial. 
\end{proof}

\begin{proposition}\label{p-model-exist}
Let $X=\text{Spec }A$ be an algebraic variety and $S=\text{Spec }A/\mathfrak{p}$ be a prime Weil divisor on $X$. Then there exists a normal  variety $Y$ and a projective birational morphism $f:Y\to X$ such that the strict transform $S'$ of $S$ is the normalization of $S$.
\end{proposition}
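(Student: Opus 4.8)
The plan is to realize the normalization $\nu:S^n\to S$ as the strict transform of $S$ under a single blow-up of $X$, and then to normalize the total space. The starting observation is that $\nu$ is finite, hence projective, and birational; consequently, by the standard structure theorem identifying projective birational morphisms to a noetherian scheme with blow-ups (see \cite{Har}, II.7), there is a coherent ideal sheaf $J\sbst \mathcal{O}_S$ with $S^n\cong \text{Bl}_J\, S$. Using the surjection $\mathcal{O}_X\surjective \mathcal{O}_S$, I would lift $J$ to a coherent ideal $I\sbst \mathcal{O}_X$ with $I\cdot\mathcal{O}_S=J$ (for instance, the full preimage of $J$, whose image in $\mathcal{O}_S$ is exactly $J$).

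Next, let $\pi:\tilde{X}=\text{Bl}_I\, X\map X$ be the blow-up of $X$ along $I$. By the compatibility of the strict transform with blowing up, the strict transform of $S$ in $\tilde{X}$ is $\text{Bl}_{I\cdot\mathcal{O}_S}\, S=\text{Bl}_J\, S\cong S^n$. Now set $Y$ to be the normalization of $\tilde{X}$. Since normalization is finite and blow-ups are projective, the composite $f:Y\map \tilde{X}\map X$ is projective and birational, and $Y$ is a normal variety, so $Y$ has the required form; it remains only to check that the strict transform $S'$ of $S$ in $Y$ is again $S^n$.

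For this last point, let $U=X\setminus V(I)$ and $S^\circ=S\cap U$. Over $U$ the blow-up $\pi$ is an isomorphism, so $\tilde{X}|_U\cong U$ is normal (here I use that $X$ is normal), and hence the normalization $Y\map \tilde{X}$ is also an isomorphism over $U$. Consequently $S'$ and the copy of $S^n$ sitting inside $\tilde{X}$ already agree over the dense open subset $S^\circ$. Since $Y\map \tilde{X}$ is finite and carries $S'$ onto the strict transform $S^n\sbst\tilde{X}$, the induced morphism $S'\map S^n$ is finite and birational; as $S'$ is integral (it is the closure of the preimage of the generic point of the integral scheme $S$) and $S^n$ is normal, a finite birational morphism onto a normal integral scheme is an isomorphism. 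Thus $S'\cong S^n$, which is precisely the assertion.

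The step I expect to require the most care is this last one: \emph{a priori} the non-normal locus of $\tilde{X}$ could meet $S^n$, so passing to the normalization $Y$ might further modify the strict transform. The resolution is exactly the principle that a finite birational morphism onto a normal integral scheme is an isomorphism, which applies because $S^n$ is normal by construction. A secondary point to set up carefully is the two foundational inputs invoked above, namely the identification of the finite morphism $\nu$ with a blow-up $\text{Bl}_J\, S$ and the compatibility $\text{Bl}_{I\cdot\mathcal{O}_S}\, S$ of strict transforms with blowing up; both are standard but should be cited precisely.
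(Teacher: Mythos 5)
Your proposal is correct and follows essentially the same route as the paper's proof: realize $\nu:S^n\to S$ as the blow-up of an ideal, lift that ideal to $X$ and blow up, then normalize the total space and conclude via the fact that a finite birational morphism onto the normal variety $S^n$ is an isomorphism. The only cosmetic difference is that you make explicit the citation of the blow-up structure theorem and the compatibility of strict transforms with blowing up, which the paper leaves implicit in its commutative diagrams.
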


\begin{proof}
Let $\nu:S^n\to S$ be the normalization of $S$. Since $\nu$ is proper and birational and $S$ is quasi-projective, it is given by a blow up of an ideal of $A$ containing $\mathfrak{p}$. Let $I$ be the corresponding ideal in $A$. Blowing up $X$ along the ideal $I$, we get the following commutative diagram:  
	\begin{displaymath}
		\xymatrix{ S^n \ar@{^{(}->}[r] \ar[d]_{\nu}  & Y_1 \ar[d]^{f_1}\\
		S \ar@{^{(}->}[r] & X }
	\end{displaymath}
where $Y_1=\text{\rm Proj}\oplus_{d\>0}I^d$ and $f_1 :Y_1\to X$ is the blow up morphism.\\	

Observe that there are open affine sets $X^\circ\sbst X_{\text{smooth}}$ and $S^\circ\sbst S_{\text{smooth}}$ such that $S^\circ= X^\circ\cap S$. Let $\pi: Y\to Y_1$ be the normalization morphism of $Y_1$, and $S'$, the strict transform of $S^n$ under $\pi$. Then we have the following commutative diagram:

	\begin{displaymath}
	\xymatrix{ S' \ar@{^{(}->}[r]\ar[d]_{\pi|_{S'}} & Y\ar[d]^\pi\\
			   S^n \ar@{^{(}->}[r] & Y_1 }	
	\end{displaymath}
	
Now $\pi|_{S'}: S'\to S^n$ is a finite birational morphism between two varieties with $S^n$ normal, hence it's an isomorphism, in particular $S'$ is normal. Set $f=f_1\circ\pi$, then $f:Y\to X$ is the required morphism.\\
\end{proof}

\begin{lemma}\label{l-Q-Cartier-on-model}
	Let $(X, S+B)\>0$ be pair where $X$ is a normal affine variety, $S+B\>0$ is a $\Q$-divisor, $K_X+S+B$ is $\Q$-Cartier and $\lrd S+B\rrd=S$ is reduced and irreducible. Also assume that $(S^n, B_{S^n})$ is klt, where $S^n\to S$ is the normalization morphism and $(K_X+S+B)|_{S^n}=K_{S^n}+B_{S^n}$, and $f:Y\to X$ as in Proposition \ref{p-model-exist}. Then for every Weil divisor $D$ in $Y$, there exists an open set $W\sbst Y$ (depending on $D$) containing all codimension $1$ points of $S'$ such that $D|_W$ is $\Q$-Cartier. 
\end{lemma}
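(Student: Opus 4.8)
The plan is to reduce the statement to a local question at the codimension $1$ points of $S'$ and to show that $Y$ is $\Q$-factorial there. First I would observe that since $\pi|_{S'}\colon S'\to S^n$ is an isomorphism (Proposition \ref{p-model-exist}) and $\nu\colon S^n\to S$ is finite, the composite $f|_{S'}\colon S'\to S$ is finite and hence preserves dimension. Thus if $\eta$ is a codimension $1$ point of $S'$, its image $\p=f(\eta)$ is a codimension $1$ point of $S$, i.e. a codimension $2$ point of $X$ lying on $S$. Since being $\Q$-Cartier at a point is detected on the local ring $\mathcal{O}_{Y,\eta}$, it suffices to prove that $\mathcal{O}_{Y,\eta}$ is $\Q$-factorial for every such $\eta$; the packaging of these local statements into a single open set $W$ is carried out at the end.

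Next I would bring the singularities of $X$ at $\p$ under control. By Corollary \ref{c-normality} we have $S_\p=S^n_\p$, and $(S^n_\p,B_{S^n_\p})$ is klt; hence by Lemma \ref{l-inv-adj} the pair $(X_\p,S_\p+B_\p)$ is plt. Proposition \ref{p-Q-Cartier} then shows that $X_\p=\operatorname{Spec}\mathcal{O}_{X,\p}$ is a $\Q$-factorial rational surface, i.e. $R^1h_*\mathcal{O}_Z=0$ for a resolution $h\colon Z\to X_\p$. Now set $Y_\p=Y\times_X X_\p$; this is a normal excellent surface and $g\colon Y_\p\to X_\p$ is projective birational (Proposition \ref{p-model-exist}). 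The key step is to propagate rationality upward: choosing a resolution $\rho\colon Z\to Y_\p$ so that $h=g\circ\rho$ resolves $X_\p$, the five-term exact sequence of the Leray spectral sequence for $g\circ\rho$, together with $\rho_*\mathcal{O}_Z=\mathcal{O}_{Y_\p}$ and the vanishing $R^2g_*\mathcal{O}_{Y_\p}=0$ (the fibres of $g$ have dimension $\le 1$), yields $g_*R^1\rho_*\mathcal{O}_Z=0$. Since $Y_\p$ is a normal surface, $R^1\rho_*\mathcal{O}_Z$ is supported on the finite singular locus of $Y_\p$, on which $g$ is finite, so $g_*$ reflects the vanishing and $R^1\rho_*\mathcal{O}_Z=0$. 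Therefore $Y_\p$ also has rational singularities, and by \cite[17.1]{Lipman2} its local class groups are finite; in particular $\mathcal{O}_{Y_\p,\eta}=\mathcal{O}_{Y,\eta}$ is $\Q$-factorial and $D$ is $\Q$-Cartier at $\eta$.

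Finally I would produce the uniform open set $W$. For each integer $m>0$ let $V_m=\{y\in Y:\mathcal{O}_Y(mD)\text{ is invertible at }y\}$, an open subset, with $V_m\subseteq V_{m'}$ whenever $m\mid m'$. By the previous paragraph $D$ is $\Q$-Cartier at every codimension $1$ point of $S'$, so the non-$\Q$-Cartier locus $\bigcap_{m}(Y\setminus V_m)$ meets $S'$ only in codimension $\ge 2$. Letting $m$ run through $N!$ and using that the descending chain of closed sets $(Y\setminus V_{N!})\cap S'$ stabilizes by Noetherianity, I obtain a single $m$ for which $(Y\setminus V_m)\cap S'$ equals this intersection, hence has codimension $\ge 2$ in $S'$. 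Taking $W=V_m$ gives an open set on which $mD$ is Cartier and which contains all codimension $1$ points of $S'$, as required.

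The main obstacle is the middle step: establishing that $Y$ is $\Q$-factorial along $S'$. Unlike in higher dimensions, $\Q$-factoriality of a normal surface is governed by rational singularities and finiteness of the local class group, so everything hinges on transferring the rationality of $X_\p$ to the higher model $Y_\p$. This is precisely what the Leray computation accomplishes, and it is where the surface hypothesis is essential, both through $R^2g_*=0$ and through the isolated, hence finite, nature of the singular locus of $Y_\p$.
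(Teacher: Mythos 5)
Your proof is correct, but the key step takes a genuinely different route from the paper's. The paper's own proof is essentially three lines: from the construction in Proposition \ref{p-model-exist}, $f$ is an isomorphism over the locus where $S$ is normal; since $S$ is normal in codimension $1$ by Corollary \ref{c-normality} and $f|_{S'}$ is finite, $f$ is an isomorphism in a neighborhood of every codimension $1$ point $\eta$ of $S'$, so $\mathcal{O}_{Y,\eta}\cong\mathcal{O}_{X,\p}$ and the $\Q$-factoriality statement of Proposition \ref{p-Q-Cartier} applies verbatim. You instead treat $Y_\p$ as a possibly nontrivial model over $X_\p$ and transfer $\Q$-factoriality upward: you show rational singularities ascend along the proper birational surface morphism $g\colon Y_\p\to X_\p$ via the five-term Leray sequence, the vanishing $R^2g_*=0$, and faithfulness of $g_*$ on sheaves supported on the finite non-isomorphism locus, and then you invoke Lipman's finiteness of the class group on $Y_\p$ itself rather than on $X_\p$. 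Your argument is longer but more robust, since it does not rely on the assertion that the blow-up $f$ is an isomorphism over the normal locus of $S$ (a point the paper uses without further justification), while the paper's is shorter because it collapses everything onto $X_\p$, where Proposition \ref{p-Q-Cartier} has already done the work. The two proofs share the same skeleton --- reduce to codimension $2$ points of $X$ lying on $S$, use Corollary \ref{c-normality} and Lemma \ref{l-inv-adj} to get pltness and hence rationality and $\Q$-factoriality --- and your final Noetherian packaging of the open set $W$ is a correct elaboration of a point the paper leaves implicit.
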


\begin{proof}
	From the construction of $f:Y\to X$ we see that it is an isomorphism at the points where $S$ is normal. By Corollary \ref{c-normality}, $S$ is normal in codimension $1$. Therefore by Proposition \ref{p-Q-Cartier}, $Y$ is $\Q$-factorial at every codimension $1$ point of $S'$ and the required open set $W$ exists.
\end{proof}

\begin{lemma}\label{l-effective-restriction}
Let $(X, S+B)$ be a pair where $X=\text{Spec }R$ is a normal variety, $S+B\>0$ is a $\Q$-divisor, $K_X+S+B$ is $\Q$-Cartier and $\lrd S+B \rrd=S$ is reduced and irreducible. Let $(S^n, B_{S^n})$ be klt, where $S^n\to S$ is the normalization morphism and $(K_X+S+B)|_{S^n}=K_{S^n}+B_{S^n}$ is defined by adjunction. Assume further that $f: Y\to X$ is a projective birational morphism from a normal variety $Y$, and $S'$ is the strict transform of $S$ such that $f|_{S'}: S'\to S$ is the normalization morphism (such $f$ exists by Proposition \ref{p-model-exist}), and
\begin{equation*}
K_Y+S'=f^*(K_X+S+B)+\tbf{A}_Y.
\end{equation*}
Then $\lru \A Y\rru|_{S'}$ is an effective $\Q$-divisor on $S'$. 
\end{lemma}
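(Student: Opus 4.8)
The plan is to show that the rounded-up discrepancy $\lru \A Y\rru$ is already effective in a neighbourhood (inside $Y$) of every codimension one point of $S'$, and only then restrict. Since $S'$ is normal and effectivity of a $\Q$-divisor is a codimension one condition, it suffices to check that for each prime divisor $\eta$ of $S'$ every component of $\lru \A Y\rru$ passing through $\eta$ has nonnegative coefficient. First I would record two structural facts. The coefficient of $S'$ in $\A Y$ is zero: $S$ occurs in $S+B$ with coefficient one and shares no component with $B$ (because $\lrd S+B\rrd=S$ is reduced), so the strict transform $S'$ appears with coefficient one in both $K_Y+S'$ and $f^*(K_X+S+B)$, and these cancel. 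Hence $\A Y$, and therefore $\lru \A Y\rru$, does not contain $S'$ in its support; the restriction $\lru \A Y\rru|_{S'}$ is then a genuine $\Q$-divisor once we know $\lru \A Y\rru$ is $\Q$-Cartier near $S'$, which is exactly what Lemma \ref{l-Q-Cartier-on-model} supplies, as it makes $Y$ $\Q$-factorial at every codimension one point of $S'$.

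The second step is the reduction to the inversion of adjunction at codimension two points of $X$. Fix a prime divisor $\eta$ of $S'$ and set $\p=\nu(\eta)$, where $\nu=f|_{S'}$ is the finite birational normalization morphism; then $\p$ is a codimension one point of $S$, i.e. a codimension two point of $X$. Passing to $X_{\p}=\text{Spec }\mathcal{O}_{X,\p}$, the pair $(S^n_{\p}, B_{S^n_{\p}})$ is still klt since klt is preserved under localization, so Lemma \ref{l-inv-adj} applies and $(X_{\p}, S_{\p}+B_{\p})$ is plt. Any component $E$ of $\A Y$ through $\eta$ satisfies $\p=f(\eta)\in f(E)$, so the center of $E$ on $X$ contains $\p$; thus $E$ descends to a divisor over $X_{\p}$ whose discrepancy with respect to $(X_{\p}, S_{\p}+B_{\p})$ equals its discrepancy $a(E)$ over $X$, discrepancies being unchanged under localization.

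Finally I would translate plt into a statement about round-ups. The support of $\A Y$ consists of the $f$-exceptional divisors, whose coefficients are the discrepancies $a(E)$, together with the strict transforms of the components of $B$, whose coefficients are $-b_i$ with $0<b_i<1$ (all other prime divisors have discrepancy zero and do not appear). For an exceptional $E$ meeting $\eta$, plt of $(X_{\p}, S_{\p}+B_{\p})$ gives $a(E)>-1$, so $\lru a(E)\rru\>0$; for a strict transform of a $B$-component, $-b_i\in(-1,0)$ forces $\lru -b_i\rru=0$. Hence every component of $\lru \A Y\rru$ through $\eta$ has nonnegative coefficient, so $\lru \A Y\rru$ is effective in a neighbourhood of $\eta$ in $Y$; being $\Q$-Cartier there and not containing $S'$, its restriction to $S'$ is effective near $\eta$. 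Letting $\eta$ range over all codimension one points of $S'$ gives $\lru \A Y\rru|_{S'}\>0$, as desired. The crux is the reduction itself: recognizing that effectivity after restriction is a codimension one phenomenon on $S'$, hence controlled by the codimension two points of $X$, where the previously established inversion of adjunction (Lemma \ref{l-inv-adj}) converts the klt hypothesis on $(S^n,B_{S^n})$ into the bound $a(E)>-1$ that renders the round-ups nonnegative.
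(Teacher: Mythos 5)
Your proof is correct and follows essentially the same route as the paper: both reduce effectivity to the codimension one points of $S'$, localize at their images --- codimension two points $\p$ of $X$ --- and use the surface-level connectedness/inversion-of-adjunction machinery established earlier in the section to rule out exceptional divisors of discrepancy $\le -1$ meeting $S'$ in codimension one. The only cosmetic difference is that you invoke Lemma \ref{l-inv-adj} (plt of $(X_{\p}, S_{\p}+B_{\p})$) as a black box and do the discrepancy bookkeeping directly, whereas the paper argues by contradiction and re-runs the connectedness lemma (Lemma \ref{l-connected}) on a log resolution of $Y_{\p}$.
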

\begin{proof}
First observe that the restriction of $\lru \A Y\rru$ to $S'$ is well-defined by Lemma \ref{l-Q-Cartier-on-model}. If $\lru \A Y\rru|_{S'}$ not effective, then there exists an exceptional divisor $E_i$ in $\A Y$ with coefficient $r_i\<-1$ such that $\text{\rm codim}_{S'} (E_i\cap S')=1$. Let $\p\in X$ be the image of the generic point of an irreducible component of $E_i|_{S'}$ under the map $f$. The height of $\p$ in $R$ is $2$, since $f|_{S'}:S'\to S$ is the normalization morphism. Let $X_{\p}=\text{\rm Spec }R_{\p}$ and $Y_{\p}=X_{\p}\times_{X} Y$. Then $X_{\p}$  and $Y_{\p}$ are both excellent surfaces. Choose a log resolution $g: Z\to Y_{\p}$ of $(Y_{\p}, S'_{\p}-{\A Y}_{\p})$. Then $g$ induces a log resolution of $(X_{\p}, S_{\p}+B_{\p})$ as well. Since $(S_{\p}, B_{S_{\p}})=(S^n_{\p}, B_{S^n_{\p}})$ is klt, by the connectedness lemma (Lemma \ref{l-connected}) we get a contradiction.
\end{proof}

\begin{proposition}\label{p-construction-Xi}
Let $(X, S+B)$ be a pair where $X=\text{Spec }R$ is a normal variety, $S+B\>0$ is a $\Q$-divisor, $K_X+S+B$ is $\Q$-Cartier and $\lrd S+B \rrd=S$ is reduced and irreducible. Also assume that $f: Y\to X$ is a projective birational morphism from a normal variety $Y$, and $S'$ is the strict transform of $S$ such that $f|_{S'}: S'\to S$ is the normalization morphism (such $f$ exists by Proposition \ref{p-model-exist}), and
\begin{equation}\label{p-log}
K_Y+S'=f^*(K_X+S+B)+\tbf{A}_Y.	
\end{equation}	
Then there exists a $\Q$-divisor $\Xi\>0$ on $Y$ satisfying the following properties:\\
$(i)\ \Xi\>S'+\{-\A Y\}$ and $\lrd \Xi\rrd=S'$,\\
$(ii)\ (p^e-1)(K_Y+\Xi) \text{ is an integral Weil divisor for some } e>0$, and\\
$(iii)\ \lru\tbf{A}_Y\rru-(K_Y+\Xi)$ is $f$-ample.\\
\end{proposition}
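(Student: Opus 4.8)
The plan is to write $\Xi$ in a single closed form that makes $(ii)$ and $(iii)$ automatic, and then to spend all the effort on $(i)$. Set $G:=\Xi-S'-\{-\A Y\}$, so that the inequality in $(i)$ is exactly $G\geq 0$. Using \eqref{p-log} together with the identity $\A Y+\{-\A Y\}=\lru\A Y\rru$ one finds
$$K_Y+\Xi=f^*(K_X+S+B)+\lru\A Y\rru+G,$$
so that, since $f^*(K_X+S+B)\equiv_f 0$,
$$\lru\A Y\rru-(K_Y+\Xi)=-f^*(K_X+S+B)-G\equiv_f -G.$$
Thus $(iii)$ is equivalent to asking that the $\Q$-divisor $-f^*(K_X+S+B)-G$ be $\Q$-Cartier and $f$-ample; and since $K_Y$ is an integral Weil divisor, $(ii)$ is equivalent to $(p^e-1)\Xi$ being integral.

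The key idea is to absorb the (possibly $p$-divisible) denominators of $f^*(K_X+S+B)$ into the integral Weil divisor $\lru\A Y\rru-K_Y$. Fix an $f$-ample Cartier divisor $A$ on $Y$ (it exists since $f$ is projective) and, for $e\gg 0$, set
$$\Xi:=\lru\A Y\rru-K_Y-\tfrac{1}{p^e-1}A.$$
Then $\Xi$ has coefficients in $\tfrac{1}{p^e-1}\Z$, so $(p^e-1)\Xi$ is integral and $(ii)$ holds; and $\lru\A Y\rru-(K_Y+\Xi)=\tfrac{1}{p^e-1}A$ is $\Q$-Cartier and $f$-ample, so $(iii)$ holds. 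Observe that no hypothesis on the index of $K_X+S+B$ is used: its fractional part is carried entirely by the integral divisor $\lru\A Y\rru-K_Y$, and never by $A$.

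It remains to arrange $(i)$, i.e. $G=-f^*(K_X+S+B)-\tfrac{1}{p^e-1}A\geq 0$ together with $\lrd\Xi\rrd=S'$, and this is where I expect the real work to be. Comparing coefficients, $G\geq 0$ forces $\tfrac{1}{p^e-1}A\leq -f^*(K_X+S+B)$, and $\lrd\Xi\rrd=S'$ forces, along each prime divisor $D\neq S'$, the coefficient of $G$ to lie in the window $[0,\,1-\{-\A Y\}_D)$, while along $S'$ it must lie in $[0,1)$. I would meet these by choosing $A$ carefully inside its relative ample class: replacing $A$ by $A+f^*C$ for a Cartier divisor $C$ on the affine base $X$ does not change the relative numerical class, so $\tfrac{1}{p^e-1}(A+f^*C)$ stays $f$-ample, while it shifts the coefficient of $A$ along each non-exceptional divisor (namely $S'$ and the strict transforms of the components of $B$) by an arbitrary integer. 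Using this I would pin those coefficients exactly, making $\Xi_{S'}=1$ and $\Xi_D=\lru(p^e-1)\{-\A Y\}_D\rru/(p^e-1)\in[\{-\A Y\}_D,1)$ for strict transforms $D$ of $B$. For the exceptional divisors the required windows have width $(p^e-1)(1-\{-\A Y\}_D)$, which tends to infinity with $e$, so for $e\gg 0$ there should be enough room to keep the (then determined) exceptional coefficients inside their windows while $A$ remains in the open relative ample cone.

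The main obstacle is precisely this last coordination: one must control \emph{all} the coefficients of the single $f$-ample divisor $A$ simultaneously, and on the exceptional locus the coefficients are constrained by the requirement that $A$ be ample over $f$, so they cannot simply be prescribed. The two features that should make it go through are that adding pullbacks $f^*C$ from the affine base is ``free'' for relative ampleness and fixes the non-exceptional coefficients, and that taking $e$ large widens the floor windows on the exceptional divisors until they can accommodate whatever the ampleness of $A$ demands.
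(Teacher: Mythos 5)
Your reduction is set up the right way around for properties $(ii)$ and $(iii)$: writing $\Xi=\lru\A Y\rru-K_Y-\frac{1}{p^e-1}A'$ with $A'$ Cartier does make both automatic, and it correctly isolates $(i)$ as the condition that $G=-f^*(K_X+S+B)-\frac{1}{p^e-1}A'$ be effective with suitably small coefficients. But $(i)$ is exactly where the argument breaks, and the mechanism you propose does not close it. Replacing $A'$ by $A'+f^*C$ for $C$ Cartier on $X$ lets you shift the coefficients along $S'$ and along the strict transforms of the components of $B$ by arbitrary integers, as you say; but it simultaneously changes the coefficient along every exceptional divisor $E$ by the coefficient of $E$ in $f^*C$, which is \emph{determined} by $C$ and cannot be prescribed. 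The constraint you need at $E$ is that the integer $(A'+f^*C)_E$ land in the window $\bigl((p^e-1)\lrd q_E\rrd,\,(p^e-1)q_E\bigr]$, where $q_E$ is the coefficient of $E$ in $-f^*(K_X+S+B)$. The width of that window grows like $p^e$, but so does its \emph{location}, and so do the coefficients of the $f^*C$ you are forced to use to pin the non-exceptional divisors; ``enough room for $e\gg0$'' is therefore not an argument. What you would actually have to prove is that there is a Cartier divisor on $X$ whose pullback approximates $-(p^e-1)f^*(K_X+S+B)$ to within a bounded error on \emph{every} exceptional divisor simultaneously. When $p\nmid\text{\rm index}(K_X+S+B)$ you can take $C=-(p^e-1)(K_X+S+B)$ and you are done --- but that is precisely the hypothesis the theorem is designed to avoid, and when $p$ divides the index no $(p^e-1)(K_X+S+B)$ is even integral, so the existence of such an approximant is the crux, not a routine estimate.

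The paper sidesteps all of this by building $\Xi$ from the other end: it sets $\Xi=S'+\{-\A Y\}+\ve F+\Delta$, where $F\>0$ is an effective Cartier divisor with $-F$ $f$-ample (available since $X$ is affine) and $\Delta=\frac{1}{p^{e_0}-1}(\Xi'+E)\>0$ has coefficients tending to $0$ as $e_0\to\infty$. With this bottom-up ansatz $(i)$ is immediate; $(iii)$ follows because $\lru\A Y\rru-(K_Y+\Xi)\sim_{\Q}-f^*(K_X+S+B)-\ve F-\Delta\equiv_f-\ve F-\Delta$ and $f$-ampleness is an open condition; and the global-generation choices of $H$, $D$ and $E$ on the affine base are what deliver $(ii)$. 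If you want to keep your top-down formula you must supply the missing approximation statement above; as written, $(i)$ is asserted rather than proved.
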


\begin{proof}
To construct such a divisor $\Xi$ we first construct an effective Cartier divisor $F$ on $Y$ such that $-F$ is $f$-ample. Since $X$ is affine and $f$ is birational, there exists an $f$-ample divisor $\mathcal{A}$ and an effective Cartier divisor $F$ (not necessarily exceptional) not containing the support of $S'$ such that $\mathcal{A}+F\sim 0$, i.e. $-F$ is $f$-ample and $S'\nsubseteq \text{ \rm Supp}F$.\\
Now rewrite the equation \eqref{p-log} in the following way
\begin{equation}\label{e-log-modifi-1}
\lru\tbf{A}_Y\rru - (K_Y+S'+\{-\tbf{A}_Y\}+\varepsilon F)\sim_{\Q} -f^*(K_X+S+B)-\varepsilon F
\end{equation}
where $\varepsilon>0$.\\
Notice that both sides of the above relation \eqref{e-log-modifi-1} are $\Q$-Cartier divisors. Let $G$ be the reduced divisor of codimension $1$ components of the exceptional locus of $f$ and $H$ a sufficiently ample divisor on $Y$ such that $\mathcal{O}_Y(H-\lru\A Y\rru-G)$ and $\mathcal{O}_Y(K_Y+H)$ are both globally generated. Let $D\> 0$ be a divisor whose support does not contain $S'$ but $D\sim H-\lru\A Y\rru-G$, then $D+G\sim H-\lru\A Y\rru$. By \eqref{e-log-modifi-1}, $K_Y+\Xi'$ is $\Q$-Cartier where 
\begin{equation*}
\Xi'=S'+\{-\A Y\}+D+\ve F+G\sim S'+\{-\A Y\}+\ve F+H-\lru\A Y\rru.
\end{equation*}
Since $\mathcal{O}_Y(K_Y+H)$ is globally generated, there exists a divisor $E\>0$ whose support does not contain $S'$ such that $E-K_Y\sim H$ is Cartier. Let $\Delta=\frac{1}{p^{e_0}-1}(\Xi'+E)$, where $e_0\gg 0$, then $\Delta \sim_{\Q}\frac{1}{p^{e_0}-1}((K_Y+\Xi')+H)$, so $\Delta\>0$ is $\Q$-Cartier. Thus $K_Y+\Xi''$ is $\Q$-Cartier and $p\nmid \text{ index}(K_Y+\Xi'')$, where $\Xi''=\Xi'+\Delta=S'+\{-\A Y\}+\ve F+D+G+\Delta$. We replace the $S'$ contained in $\Delta$ by an integral Weil divisor $S_1\>0$ such that $S'\sim S_1$ and $S_1$ does not contain $S'$, then we still have $K_Y+\Xi''$ is $\Q$-Cartier and $p\nmid \text{ index}(K_Y+\Xi'')$.\\

We can rewrite the relation \eqref{e-log-modifi-1} in the following way
\begin{equation}\label{e-log-modifi-2}
\lru\A Y\rru-(K_Y+\Xi''-D-G)\sim_{\Q} -f^*(K_X+S+B)-\ve F-\Delta.
\end{equation}
Let $\Xi=\Xi''-D-G$. Then from the relation above we get that $\lru\A Y\rru-(K_Y+\Xi)$ is a $\Q$-Cartier $f$-ample divisor for $e_0\gg 0$, since $-F$ is $f$-ample and the coefficients of $\Delta$ are small for $e_0\gg 0$. Also notice that the denominators of $K_Y+\Xi$ are still not divisible by $p$. Thus $\Xi$ satisfies all the three properties stated above. 	
\end{proof}

\begin{lemma}\label{l-construction-Xi-more}
With the same notations and hypothesis as in the Proposition \ref{p-construction-Xi} further assume that $(S^n, B_{S^n})$ is strongly $F$-regular, where $S^n\to S$ is the normalization morphism and $(K_X+S+B)|_{S^n}=K_{S^n}+B_{S^n}$ is defined by adjunction. Then we can choose the divisor $\Xi$ to satisfy additionally the following properties\\
$(iv)$ $\Xi\<S'+\{-\A Y\}+f^*A$ for some $\Q$-Cartier $\Q$-divisor $A\>0$ on $X$ and\\
$(v)$ $(S^n, B^*_{S^n})$ is strongly $F$-regular, where $B^*_{S^n}=B_{S^n}+A|_{S^n}$. 	
\end{lemma}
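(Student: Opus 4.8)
The plan is to repeat the construction of $\Xi$ carried out in Proposition \ref{p-construction-Xi}, but to fix the auxiliary divisor $A$ \emph{first}, chosen so small that strong $F$-regularity survives on $S^n$, and only afterwards to choose the parameters $\ve$ and $e_0$ small/large enough to force the upper bound $(iv)$. Recall from that proof that $\Xi=S'+\{-\A Y\}+\ve F+\Delta$, so that $\Psi:=\Xi-S'-\{-\A Y\}=\ve F+\Delta\>0$, and that property $(iv)$ amounts exactly to the existence of an effective $\Q$-Cartier divisor $A\>0$ on $X$ with $f^*A\>\Psi$. Two features of $\Psi$ drive the argument: its support lies in a \emph{fixed} reduced divisor $\Sigma$ (the union of the supports of $F,\ S_1,\ \{-\A Y\},\ D,\ G,\ E$), independent of $\ve$ and $e_0$; and every coefficient of $\Psi$ tends to $0$ as $\ve\map 0$ and $e_0\map\ift$. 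Moreover $S'\nsubseteq\text{Supp }\Psi$, since by construction no summand of $\ve F+\Delta$ contains $S'$.

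First I would produce, on the possibly non-$\Q$-factorial $X$, a single effective Cartier divisor whose pullback dominates all of $\Sigma$. Since $X=\text{Spec }R$ is affine, I list the finitely many components of $\Sigma$: the non-exceptional ones are strict transforms of prime divisors $\bar P\sbst X$, and each exceptional component $E_j$ has a center $Z_j\sbst X$ of codimension $\>2$; none of these equals $S$, because $S'\nsubseteq\text{Supp }\Psi$. Hence the ideal $J=\bigcap_{\bar P}I_{\bar P}\cap\bigcap_j I_{Z_j}$ is not contained in the prime $I_S$, and I may choose $h\in J\setminus I_S$. Then $A_0:=\text{div}(h)$ is an effective Cartier divisor whose support avoids $S$, and since $\text{ord}_{\bar P}(h)\>1$ while the divisorial valuation attached to $E_j$ satisfies $\text{ord}_{E_j}(h)\>1$, we get $f^*A_0\>\Sigma$, i.e.\ $f^*A_0$ has coefficient $\>1$ along every component of $\text{Supp }\Psi$.

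Next I would scale to secure $(v)$. Strong $F$-regularity is preserved under sufficiently small perturbations of the boundary (see \cite{BSTZ}), so there is a rational $t>0$ with $(S^n,\,B_{S^n}+t\,A_0|_{S^n})$ strongly $F$-regular; I set $A=t A_0$. As $A_0$ is Cartier and its support avoids $S$, the restriction $A|_{S^n}=t\,\nu^*(A_0|_S)$ is a well-defined effective $\Q$-Cartier divisor, and with $B^*_{S^n}=B_{S^n}+A|_{S^n}$ this is precisely $(v)$. Finally, with $A$ fixed, I run the construction of Proposition \ref{p-construction-Xi} taking $\ve$ small and $e_0\gg 0$ so that $(i)$--$(iii)$ hold and, in addition, every coefficient of $\Psi=\ve F+\Delta$ is $<t$. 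Then coefficientwise $\Psi\<t\,f^*A_0=f^*A$, which is $(iv)$.

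The main obstacle is the construction in the second paragraph: $X$ need not be $\Q$-factorial, so one cannot simply push $\Psi$ forward to a $\Q$-Cartier divisor, and the pullback of a divisor from $X$ must still dominate the \emph{exceptional} part of $\Psi$ sitting over centers of codimension $\>2$, all while $A$ restricts to an arbitrarily small divisor on $S^n$. Affineness of $X$ breaks this tension: a principal divisor $\text{div}(h)$ automatically acquires positive order along every exceptional divisor whose center lies in $V(h)$, so a single $h\in J\setminus I_S$ dominates all the required centers at once, while $h\notin I_S$ keeps $S$ off its support, so that scaling by a small $t$ leaves strong $F$-regularity on $S^n$ intact.
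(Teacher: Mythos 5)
Your proposal is correct and follows essentially the same route as the paper: fix an effective $\Q$-Cartier divisor $A\>0$ on $X$ whose support contains the images of all the auxiliary divisors and exceptional centers from the construction of $\Xi$ but avoids $S$, chosen small enough that $(S^n,B_{S^n}+A|_{S^n})$ stays strongly $F$-regular, and then take $e_0\gg 0$ and $0<\ve\ll 1$ so that $\ve F+\Delta\<f^*A$. Your explicit realization of $A$ as a scaled principal divisor $t\,\text{div}(h)$ with $h\in J\setminus I_S$ merely fills in the existence of such an $A$ (and why its pullback dominates the exceptional components), which the paper asserts without detail.
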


\begin{proof}
Let $A$ be an effective $\Q$-Cartier $\Q$-divisor on $X$ whose support contains $f(\text{Ex}(f))$, $\text{Supp}(B)$ and also $f(D), f(H), f(E), f(F)$ and $f(S_1)$ which appeared  during the construction of $\Xi$ in Proposition \ref{p-construction-Xi}, but not the $\text{Supp}(S)$, such that $(S^n,B_{S^n}+A|_{S^n})$ is strongly $F$-regular.        		  
Recall from the proof of Proposition \ref{p-construction-Xi} that $\Xi=\Xi''-D-G=S'+\{-\A Y\}+\ve F+\frac{1}{p^{e_0}-1}(\Xi'+E)$. Thus by choosing $e_0\gg 0$ and $0<\varepsilon \ll 1$ we can guaranty that $\Xi$ satisfies both of the properties $(iv)$ and $(v)$. 
\end{proof}

\section{Inversion of Adjunction}
\begin{theorem}\label{t-inv-adj} Let $(X, S+B)$ be a pair where $X$ is a normal variety, $S+B\>0$ is a $\Q$-divisor, $K_X+S+B$ is $\Q$-Cartier and $S=\lrd S+B \rrd$ is reduced and irreducible. Let $\nu:S^n\to S$ be the normalization morphism, write $(K_X+S+B)|_{S^n}=K_{S^n}+B_{S^n}$. If $(S^n, B_{S^n})$ is strongly $F$-regular then $S$ is normal, furthermore $S$ is a unique \tit{center of sharp $F$-purity} of $(X, S+B)$ in a neighborhood of $S$ and $(X, S+B)$ is purely $F$-regular near $S$.	
\end{theorem}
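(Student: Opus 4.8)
The plan is to carry out the geometric argument of \cite{HX} in positive characteristic, substituting an iterated Frobenius together with relative Serre vanishing for the relative Kawamata--Viehweg vanishing theorem that is unavailable here. All three conclusions---that $S$ is normal, that $S$ is the unique center of sharp $F$-purity of $(X,S+B)$ near $S$, and that $(X,S+B)$ is purely $F$-regular near $S$---will be deduced from one surjectivity statement on a suitable birational model, obtained by lifting a splitting that already exists on the normalization $S^n$.

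First I would note that a strongly $F$-regular pair is klt, so $(S^n,B_{S^n})$ is klt and Corollary \ref{c-normality} gives that $S$ is normal in codimension $1$. This is exactly what licenses the use of the model $f\colon Y\map X$ of Proposition \ref{p-model-exist}, on which the strict transform $S'$ of $S$ is the normalization $S^n$ and on which Lemma \ref{l-Q-Cartier-on-model} makes the Weil divisors I handle $\Q$-Cartier near every codimension $1$ point of $S'$. Writing $K_Y+S'=f^*(K_X+S+B)+\A Y$ as in \eqref{p-log}, I would invoke Proposition \ref{p-construction-Xi} and Lemma \ref{l-construction-Xi-more} to build the boundary $\Xi\>0$ enjoying properties $(i)$--$(v)$, the crucial point being that by $(v)$ the pair $(S^n,B^*_{S^n})$ remains strongly $F$-regular.

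The heart of the argument is to transport a splitting from $S'$ up to $Y$ and then down to $X$. Property $(ii)$ makes $(p^e-1)(K_Y+\Xi)$ integral, so there is a Grothendieck trace map for $(Y,\Xi)$; twisting it by $\lru\A Y\rru$ and restricting to $S'=\lrd\Xi\rrd$ (property $(i)$) yields, by the $F$-adjunction formalism of \cite{Schwede-F-adj}, the trace map of $(S',\Delta_{S'})$ computed with the different $\Delta_{S'}$. Since $\Delta_{S'}\<B^*_{S^n}$ and $(S^n,B^*_{S^n})$ is strongly $F$-regular, so is $(S^n,\Delta_{S'})$, and hence---after multiplying by a suitable nonzero $c$---this restricted trace is surjective onto $\mathcal{O}_{S'}$. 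To lift this surjectivity to $Y$ I would use property $(iii)$: because $\lru\A Y\rru-(K_Y+\Xi)$ is $f$-ample, after the twist dictated by the trace map the source sheaf becomes $f$-ample up to a bounded correction, so applying $f_*$, pulling back a sufficiently ample divisor from the affine base $X$, and iterating Frobenius, relative Serre vanishing kills the obstruction and forces the pushed-forward restriction map onto $S'$ to be surjective. Pushing the lifted splitting down through $f_*\mathcal{O}_Y=\mathcal{O}_X$, and using property $(iv)$, $\Xi\<S'+\{-\A Y\}+f^*A$, to keep the auxiliary divisor $A$ small enough that the splitting is read off for the boundary $S+B$ itself, yields that $(X,S+B)$ is purely $F$-regular in a neighborhood of $S$.

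The remaining conclusions follow from the same surjectivity. By Lemma \ref{l-effective-restriction} the divisor $\lru\A Y\rru|_{S'}$ is effective, so $\nu_*\mathcal{O}_{S^n}=f_*\mathcal{O}_{S'}$ sits inside $f_*\mathcal{O}_{S'}(\lru\A Y\rru|_{S'})$; the induced surjection $\mathcal{O}_X\surjective f_*\mathcal{O}_{S'}(\lru\A Y\rru|_{S'})$ factors through $\mathcal{O}_S$, so $\mathcal{O}_S\to\nu_*\mathcal{O}_{S^n}$ is forced to be an isomorphism and $S$ is normal in every codimension, upgrading Corollary \ref{c-normality}. Finally, pure $F$-regularity of $(X,S+B)$ near $S$ means the pair is strongly $F$-regular away from $S=\lrd S+B\rrd$, so the only center of sharp $F$-purity meeting a neighborhood of $S$ is $S$ itself, which is also a center by the $F$-purity along it; this gives both existence and uniqueness. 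I expect the main obstacle to be precisely the lifting step: producing, without Kawamata--Viehweg vanishing, the surjectivity of the restriction of the twisted trace map, while simultaneously arranging through the bookkeeping of $(iii)$ and $(iv)$ that the resulting splitting on $X$ is genuinely a splitting for $(X,S+B)$ rather than for the perturbed pair $(X,S+B+A)$. This is exactly where the positive-characteristic machinery must replace the characteristic-zero vanishing theorems.
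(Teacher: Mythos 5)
Your proposal follows the same strategy as the paper's proof: the model $f\colon Y\to X$ of Proposition \ref{p-model-exist}, the auxiliary boundary $\Xi$ of Proposition \ref{p-construction-Xi} and Lemma \ref{l-construction-Xi-more}, relative Serre vanishing via property $(iii)$ in place of Kawamata--Viehweg, and the transport of the splitting furnished by strong $F$-regularity of $(S^n,B^*_{S^n})$ through the restriction sequence for $S'$ to conclude that $\mathcal{O}_X\to\nu_*\mathcal{O}_{S^n}$ is surjective, whence $S$ is normal. Your reordering of the last two conclusions---deducing uniqueness of the center of sharp $F$-purity from pure $F$-regularity rather than proving it first by a direct perturbation as the paper does---is legitimate: if $Z\neq S$ were a center meeting $S$, one picks $c$ in the ideal of $Z$ not vanishing along $S$, and the splitting of $c\cdot F^e$ contradicts the uniform $F$-compatibility of that ideal.

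The gap is in how you obtain pure $F$-regularity itself. You ask the bookkeeping to arrange ``that the resulting splitting on $X$ is genuinely a splitting for $(X,S+B)$ rather than for the perturbed pair''; that would only prove sharp $F$-purity near $S$. Pure $F$-regularity demands a splitting of $\mathcal{O}_X\to F^{e}_*\mathcal{O}_X(\lceil(p^{e}-1)(S+B)\rceil+E)$ for \emph{every} effective Cartier divisor $E$ not containing $S$, so the perturbation is not a nuisance to be removed but the essential content of the statement: one must rerun the whole surjectivity argument with $\Xi$ replaced by $\Xi+f^*(D_1+\delta D_2)$, where $D_2$ is chosen to contain the support of the given $E$ (or of the putative center $Z$), and verify that properties $(i)$--$(v)$ and the strong $F$-regularity of the correspondingly enlarged boundary on $S^n$ survive for $0<\delta\ll 1$. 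This is precisely what the paper does in its ``Uniqueness'' and ``$F$-regular inversion of adjunction'' steps; as written, your argument stops at sharp $F$-purity near $S$, and consequently your route to uniqueness of the center also does not get off the ground.
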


\begin{proof}\textbf{Normality of $S$}:
Since the question is local on the base, we can assume that $X$ is an affine variety. Let $f:Y\to X$ be a projective birational morphism from a normal variety $Y$, and $S'$ be the strict transform of $S$ such that $f|_{S'}: S'\to S$ is the normalization morphism (such $f$ exists by Proposition \ref{p-model-exist}), and  
\begin{equation}\label{e-log}
K_Y+S'=f^*(K_X+S+B)+\tbf{A}_Y.
\end{equation}
\textbf{Claim}: The image of the map
\begin{equation}\label{e-surjectivity}
	f_*\mathcal{O}_Y(\lru \A Y\rru)\overset{\delta}{\rightarrow} (f|_{S'})_*\mathcal{O}_{S'}(\lru \A Y\rru |_{S'})
\end{equation}
contains $\nu_*\mathcal{O}_{S^n}$.\\

Grant \eqref{e-surjectivity} for the time being, then since 
\[ f_*\mathcal{O}_Y(\lru\A Y\rru)\sbst\mathcal{O}_X \]
(as $\lru\A Y\rru$ is exceptional), it follows that the morphism $\mathcal{O}_X\to \nu_*\mathcal{O}_{S^n}$ is surjective. This implies that $\nu_*\mathcal{O}_{S^n}=\mathcal{O}_{S}$, hence $S=S^n$.\\
\\
\textbf{Proof of Claim \eqref{e-surjectivity}}: We have the following short exact sequence
\begin{equation}\label{e-define-Q} 
0\rightarrow\mathcal{O}_{Y}(\lru\A Y\rru-S')\rightarrow\mathcal{O}_Y(\lru\A Y\rru)\rightarrow Q\rightarrow 0 	
\end{equation}
where $Q\to\mathcal{O}_{S'}\left(\lru\A Y\rru |_{S'}\right)$ is the natural map and $\lru\A Y\rru |_{S'}$ is well defined by Lemma \ref{l-Q-Cartier-on-model}.\\ 

Let $\Xi\>0$ be a $\Q$-divisor on $Y$ as in the conclusion of the Proposition \ref{p-construction-Xi} and \ref{l-construction-Xi-more} and $\Xi''\>0$ is another $\Q$-divisor on $Y$ which appeared in the proof of the Proposition \ref{p-construction-Xi}. Let $g>0$ be an integer such that $(p^g-1)(K_Y+\Xi'')$ is a Cartier divisor and $(p^g-1)(K_Y+\Xi)$ is an integral Weil divisor. Such integer $g>0$ exists by the definition of $\Xi''$ and Property $(ii)$ of $\Xi$ in Proposition \ref{p-construction-Xi}. Also assume that $L_{eg,\Xi}=(1-p^{eg})(K_Y+\Xi)$. Then from \eqref{e-log-modifi-2} we have
\begin{equation*}
 (p^{eg}-1)\lru \A Y\rru+L_{eg, \Xi}=(p^{eg}-1)(\lru \A Y\rru-(K_Y+\Xi))\sim (p^{eg}-1)(H-(K_Y+\Xi'')) 
\end{equation*}
is an ample Cartier divisor. Twisting the exact sequence \eqref{e-define-Q} by the ample line bundle $\mathcal{O}_Y((p^{eg}-1)\lru \A Y\rru+L_{eg,\Xi})$ and taking cohomologies we get the following diagram:\\
\begingroup
\fontsize{10pt}{12pt}\selectfont
\begin{equation}\label{L-diagram}
	\xymatrixrowsep{1.5cm}
	\xymatrixcolsep{0.5cm}
	\xymatrix{ 0 \ar[r] & F^{eg}_*f_*L \ar[r]  \ar[d] & F^{eg}_*f_*M \ar[r]^{\gamma_e}  \ar[d]_{\alpha_e} & F^{eg}_*(f|_{S'})_*N \ar[r] \ar[d]^{\beta_e} & 0\\
	0 \ar[r] & f_*\mathcal{O}_Y(\lru\A Y\rru-S') \ar[r] & f_*\mathcal{O}_Y(\lru\A Y\rru) \ar[r]^-\delta & (f|_{S'})_*\mathcal{O}_{S'}(\lru\A Y\rru |_{S'}) & }
\end{equation}
\endgroup
where $L=\mathcal{O}_Y(L_{eg,\Xi}+p^{eg}\lru\A Y\rru-S'), M=\mathcal{O}_Y(L_{eg,\Xi}+p^{eg}\lru\A Y\rru) \text{ and }$ \\
$N=Q\otimes\mathcal{O}_Y((p^{ge}-1)\lru \A Y\rru+L_{eg,\Xi})$. The top sequence is exact since
\begin{equation*}
	\begin{split}
&R^1f_*\mathcal{O}_Y(L_{eg,\Xi}+p^{eg}\lru\A Y\rru-S')\\
&=R^1f_*\mathcal{O}_Y((\lru\A Y\rru-S')+(p^{eg}-1)(\lru\A Y\rru-K_Y-\Xi))\\
&=0 \text{ for } e\gg 0,\\
    \end{split} 
\end{equation*}
by Property $(iii)$ of $\Xi$ in the Proposition \ref{p-construction-Xi} and the Serre Vanishing theorem.\\

Existence of the vertical morphisms in the digram \eqref{L-diagram} is guaranteed by Lemma \ref{vertical-definitions}. From the commutativity of the above diagram \eqref{L-diagram} we get that
\begin{equation}\label{e-alpha-beta-surjectivity}
	\text{\rm Image}(\alpha_e)\overset{\delta}{\surjective} \text{\rm Image}(\beta_e)
\end{equation}
 is surjective, for all $e\gg 0$.\\

Also we have the following commutative diagram\\
\begingroup
\fontsize{10pt}{12pt}\selectfont
\begin{displaymath}
	\xymatrixrowsep{1.5cm}
	\xymatrixcolsep{0.5cm}
	\xymatrix{
	F^{eg}_*(f|_{S'})_*(Q\otimes\mathcal{O}_Y(L_{eg,\Xi}+(p^{eg}-1)\lru\A Y\rru)) \ar[r] \ar[d]^{\beta_e} & F^{eg}_*(f|_{S'})_*\mathcal{O}_{S'}((L_{eg,\Xi})|_{S'}+p^{eg}\lru\A Y\rru |_{S'}) \ar[d]^{\psi_e}\\
(f|_{S'})_*\mathcal{O}_{S'}(\lru\A Y\rru |_{S'})) \ar@{=}[r] & (f|_{S'})_*\mathcal{O}_{S'}(\lru\A Y\rru |_{S'}) }
\end{displaymath}
\endgroup
where $(L_{eg,\Xi})|_{S'}=(1-p^{eg})(K_{S'}+\Xi_{S'})$, $\Xi_{S'}$ is an effective $\Q$-divisor on $S'$ defined by adjunction such that $K_{S'}+\Xi_{S'}=(K_Y+\Xi)|_{S'}$. Observe that the adjunction formula makes sense because $\mathcal{O}_Y(m(K_Y+\Xi))$ is locally free at all condimension $1$ points of $S'$ for some $m>0$ by Lemma \ref{l-Q-Cartier-on-model} and so all the hypothesis of \cite[Definition 4.2]{Kollar} are satisfied.\\

Clearly $\text{\rm Image}(\beta_e)\inclusion \text{\rm Image}{(\psi_e)}$. We will prove that $\text{\rm Image}(\beta_e)$ contains $\nu_*\mathcal{O}_{S^n}$ for all $e\gg 0$.\\

Since $\{-\A Y\}-\lru\A Y\rru=-\A Y$, the inequality $(iv)$ in Proposition \ref{l-construction-Xi-more} implies (after adding $K_Y-\lru\A Y\rru$ and restricting to $S'$) that 

\begin{equation}\label{e-inequality-Xi}
 h^*(K_{S^n}+B_{S^n}+A|_{S^n})\>K_{S'}+\Xi_{S'}-\lru\A Y\rru |_{S'}
\end{equation}
where $h:S'\to S^n$ is the induced morphism.\\
Since $\lru \A Y\rru|_{S'}$ is effective by Lemma \ref{l-effective-restriction}, from \eqref{e-inequality-Xi} we get
\[ (1-p^e)h^*(K_{S^n}+B^*_{S^n})\<(1-p^e)(K_{S'}+\Xi_{S'})+p^e\lru\A Y\rru |_{S'} \]
and so 
\begingroup
\fontsize{10pt}{12pt}\selectfont
\begin{equation}\label{e-strongly-F-reg}
 F^e_*\mathcal{O}_{S^n}((1-p^e)(K_{S^n}+B^*_{S^n}))\sbst h_*F^e_*\mathcal{O}_{S'}((1-p^e)(K_{S'}+\Xi_{S'})+p^e\lru\A Y\rru |_{S'}). 
\end{equation}
\endgroup

Since $(S^n, B^*_{S^n})$ is strongly $F$-regular, by perturbing $B^*_{S^n}$ a little bit we can assume that $p \nmid \text{ index}(K_{S^n}+B^*_{S^n})$ and $(S^n, B^*_{S^n})$ is still strongly $F$-regular (see \cite[2.13]{HX}). Let $\widehat{Q}=Q/\text{\rm torsion}$. Observe that $\widehat{Q}$ is a rank $1$ torsion free sheaf on $S'$ and $\widehat{Q}\inclusion \mathcal{O}_{S'}(\lru\A Y\rru |_{S'})$. Let $C$ be an effective Cartier divisor on $S^n$ containing $h(\text{Supp}(\mathcal{O}_{S'}(\lru\A Y\rru |_{S'})/\widehat{Q}))$. Then $(S^n, B^*_{S^n}+\e' C)$ is strongly $F$-regular for $0<\e'\ll 1$. For $e\gg 0$ (where $e$ depends on $\e'>0$) we get the following factorizations of morphisms
\begin{equation}\label{e-C-construction} 
\mathcal{O}_{S'}(\lru\A Y\rru |_{S'}-(p^e-1)h^*(\e' C))\inclusion \widehat{Q}\inclusion \mathcal{O}_{S'}(\lru\A Y\rru |_{S'}).
\end{equation}
Combining all these we get the following commutative diagram:\\

\begingroup
\fontsize{8pt}{12pt}\selectfont
\begin{displaymath}
	\xymatrixrowsep{1.5cm}
	\xymatrixcolsep{0.5cm}
	\xymatrix{
		\nu_*F^{eg'}_*\mathcal{O}_{S^n}((1-p^{eg'})(K_{S^n}+B^*_{S^n}+\e' C)) \ar@{>>}[r] \ar@{^{(}->}[d]^{cf.\ \eqref{e-strongly-F-reg}} & \nu_*\mathcal{O}_{S^n} \ar@{^{(}->}[ddd]\\ 
		\nu_*h_*F^{eg'}_*\mathcal{O}_{S'}\left((p^{eg'}-1)\left(\lru\A Y\rru |_{S'}-K_{S'}-\Xi_{S'}\right)+\left(\lru\A Y\rru |_{S'}-(p^{eg'}-1)\e' h^*C\right)\right) \ar@{^{(}->}[d]^{\eqref{e-C-construction}} & \\
		\nu_*h_*F^{eg'}_*\left(\mathcal{O}_{S'}\left((p^{eg'}-1)\left(\lru\A Y\rru |_{S'}-K_{S'}-\Xi_{S'}\right)\right)\otimes \widehat{Q}\right)\ar@{^{(}->}[d]^{\eqref{e-C-construction}} & \\
		\nu_*h_*F^{eg'}_*\mathcal{O}_{S'}\left((1-p^{eg'})\left(K_{S'}+\Xi_{S'}\right)+p^{eg'}\lru\A Y\rru |_{S'}\right)\ar[r] & \nu_*h_*\mathcal{O}_{S'}\left(\lru\A Y\rru |_{S'}\right) }
\end{displaymath}
\endgroup
Since $(S^n, B^*_{S^n}+\e' C)$ is strongly $F$-regular and $\nu$ is a finite morphism, the top horizontal row is surjective for all $e\gg 0$. This implies that the image, $\text{\rm Image}(\beta_e)$ of the map
\begin{equation*} 
\nu_*h_*F^{eg'}_*(\mathcal{O}_{S'}((p^{eg'}-1)(\lru\A Y\rru |_{S'}-K_{S'}-\Xi_{S'}))\otimes Q)\overset{\beta_e}{\rightarrow} \nu_*h_*\mathcal{O}_{S'}(\lru\A Y\rru |_{S'})
\end{equation*}
contains $\nu_*\mathcal{O}_{S^n}$ for all $e\gg 0$, since $\beta_e$ factors through $\nu_*h_*F^{eg'}_*(\mathcal{O}_{S'}((p^{eg'}-1)(\lru\A Y\rru |_{S'}-K_{S'}-\Xi_{S'}))\otimes \widehat{Q})$. Combining this with \eqref{e-alpha-beta-surjectivity} we get our Claim \eqref{e-surjectivity}.\\
\\
\textbf{Uniqueness of the $F$-pure Center}: Now we will prove that $S$ is the unique center of sharp $F$-purity of $(X, S+B)$ in a neighborhood of $S$. Recall that\\ 
$$\text{\rm Image}(\alpha_e)\sbst f_*\mathcal{O}_Y(\lru\A Y\rru)\sbst\mathcal{O}_X.$$
Since $S$ normal, from the proof of the claim \eqref{e-surjectivity} we get that $\text{\rm Image}(\alpha_e)$ surjects onto $\mathcal{O}_S$, hence $\text{\rm Image}(\alpha_e)=f_*\mathcal{O}_Y(\lru\A Y\rru)=\mathcal{O}_X$ near $S$, i.e. $\lru\A Y\rru$ is effective and exceptional over a neighborhood of $S$.\\

Now, if possible, let $Z$ be a center of sharp $F$-purity of $(X, S+B)$ such that $Z\cap S\neq \emptyset$. Let $D_1$ be an effective $\Q$-Cartier divisor on $X$ such that $S\nsubseteq D_1$ and $p \nmid \text{ index}(K_X+S+B+D_1)$. Such $D_1$ exists by \cite[2.13]{HX} since $X$ is affine and hence $S\sim T$, where $T\>0$ and $\text{\rm Supp} T\nsupseteq S$. Choose $D_2$, a Cartier divisor such that $Z\sbst D_2$ but $S\nsubseteq D_2$. Choose the coefficients of $D_1$ sufficiently small and $1\gg \delta>0$, so that $\Xi+f^*(D)$ satisfies all the properties of $\Xi$, where $D=D_1+\delta D_2$. Then running through the same proof as above with $\Xi$ replaced by $\Xi+f^*(D)$, we get that $\text{\rm Image}(\alpha_e)=f_*\mathcal{O}_Y(\lru\A Y\rru)=\mathcal{O}_X$ near $S$, i.e.,

\begingroup
\fontsize{10pt}{12pt}\selectfont
\begin{equation}\label{e-inv-adj}
 F^{eg}_*f_*\mathcal{O}_Y\left((1-p^{eg})(K_Y+\Xi+f^*(D)+p^{eg}\lru\A Y\rru\right)\to f_*\mathcal{O}_Y(\lru\A Y\rru)=\mathcal{O}_X
\end{equation}
\endgroup
is surjective near $S$ for all $e\gg 0$.\\

Now \begin{equation*}
\begin{split}
& \left((1-p^{eg})(K_Y+\Xi+ f^*(D))+p^{eg}\lru\A Y\rru\right)-\left((1-p^{eg})f^*(K_X+S+B+ D)\right)\\
& =\lru\A Y\rru-(p^{eg}-1)(\Delta+\e F)\\
& = \lru\A Y\rru - \frac{p^{eg}-1}{p^{e_0}-1}(\Xi'+E) - (p^{eg}-1)\e F\<0
\end{split}
\end{equation*}
for sufficiently large and divisible $e>0$, since $\text{Supp }\lru \A Y\rru\sbst \text{Supp }G\sbst \text{Supp }\Xi'$, where $G$ is the reduced divisor of codimension $1$ components of the exceptional locus of $f$.\\

This gives the following commutative diagram near $S$

\begingroup
\fontsize{9pt}{12pt}\selectfont
\begin{displaymath}
	\xymatrixrowsep{1.5cm}
	\xymatrixcolsep{0.5cm}
	\xymatrix{ 0 \ar[r] & F^{eg}_*f_*\mathcal{O}_Y((1-p^{eg})(K_Y+\Xi+f^*(D))+p^{eg}\lru \mathbf{A}_Y\rru) \ar[d] \ar[r] &  F^{eg}_*\mathcal{O}_X((1-p^{eg})(K_X+S+B+D)) \ar[d] \\	
	0\ar[r] & f_*\mathcal{O}_Y(\lru \mathbf{A}_Y\rru) \ar@{=}[r] & \mathcal{O}_X	 }
\end{displaymath}
\endgroup

Since the image of the second vertical map stabilizes to $\sigma(X, S+B+ D)$ for $e\gg 0$, by \eqref{e-inv-adj} we see that $\sigma(X, S+B+ D)=\mathcal{O}_X$ near $S$. Thus $(X, S+B+ D)$ is sharply $F$-pure near $S$. Hence $Z$ is not a center of $F$-purity for $(X, S+B)$, a contradiction.\\
\\
\textbf{$F$-regular Inversion of Adjunction}: For any effective Cartier divisor $E$ not containing $S$, in the proof above we may assume that $\text{\rm Supp} E\sbst \text{\rm Supp} D$ and hence the natural map $\mathcal{O}_X\to F^{eg}_*\mathcal{O}_X(\lru( p^{eg}-1)(S+B)\rru+E)$ splits near $S$. Therefore $(X, S+B)$ is purely $F$-regular near $S$.\\ 
\end{proof}	

\begin{lemma}\label{vertical-definitions}
The vertical morphisms in the diagram \eqref{L-diagram} are well defined.
	\end{lemma}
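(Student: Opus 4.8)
The plan is to realize all three vertical arrows as push-forwards of a single twisted Frobenius trace, constructed first on $Y$ (respectively on $S'$) and only then pushed to $X$ (respectively to $S$) via $f_*$. The basic object is the trace $\phi^{eg}\colon F^{eg}_*\mathcal{O}_Y(L_{eg,\Xi})\to\mathcal{O}_Y$ attached to $(Y,\Xi)$, exactly as in the definition of the non-$F$-pure ideal. Although $(p^g-1)(K_Y+\Xi)$ is only an integral Weil divisor (Property $(ii)$ of Proposition \ref{p-construction-Xi}) and $Y$ need not be $\Q$-factorial, $\phi^{eg}$ is still defined on all of $Y$ as a morphism of reflexive sheaves: by Grothendieck duality for the finite Frobenius $F^{eg}$ on the normal variety $Y$ one has $\mathcal{H}om_{\mathcal{O}_Y}\!\big(F^{eg}_*\mathcal{O}_Y(L_{eg,\Xi}),\mathcal{O}_Y\big)\cong F^{eg}_*\mathcal{O}_Y\big((p^{eg}-1)\Xi\big)$, and since $\Xi\>0$ the canonical section $\mathcal{O}_Y\inclusion\mathcal{O}_Y((p^{eg}-1)\Xi)$ singles out $\phi^{eg}$ (cf. \cite{Schwede-F-adj}). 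I will use the compatibility $f_*\circ F^{eg}_*=F^{eg}_*\circ f_*$, valid because the absolute Frobenius commutes with $f$, so that anything produced on $Y$ descends to an $\mathcal{O}_X$-morphism after applying $f_*$.

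For the middle arrow $\alpha_e$ and the left-hand vertical arrow I would twist $\phi^{eg}$ by a line bundle. On the open locus $V\sbst Y$ where $\lru\A Y\rru$ (respectively $\lru\A Y\rru-S'$) is Cartier, the projection formula gives $F^{eg}_*\mathcal{O}_Y(L_{eg,\Xi}+p^{eg}\lru\A Y\rru)|_V\cong\big(F^{eg}_*\mathcal{O}_Y(L_{eg,\Xi})\otimes\mathcal{O}_Y(\lru\A Y\rru)\big)|_V$, and $\phi^{eg}\otimes\mathrm{id}$ yields a map $F^{eg}_*M|_V\to\mathcal{O}_Y(\lru\A Y\rru)|_V$. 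The key point is that the non-Cartier locus of a Weil divisor on a normal variety is closed of codimension $\>2$, so $\mathrm{codim}_Y(Y\setminus V)\>2$; since $F^{eg}_*M$ is reflexive (Proposition \ref{rp}$(3)$) and $\mathcal{O}_Y(\lru\A Y\rru)$ is reflexive, the map on $V$ extends uniquely to $\widetilde{\alpha}_e\colon F^{eg}_*M\to\mathcal{O}_Y(\lru\A Y\rru)$ on all of $Y$ by the equivalence of categories in Proposition \ref{rp}$(5),(6)$, with Proposition \ref{rp}$(7)$ ensuring functoriality of the extension. Applying $f_*$ gives $\alpha_e$; the identical argument, twisting instead by $\mathcal{O}_Y(\lru\A Y\rru-S')$, produces the left vertical arrow, and comparing both constructions on $V$ shows that the left-hand square of \eqref{L-diagram} commutes.

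Finally, $\beta_e$ is the arrow induced on quotients: since the top row of \eqref{L-diagram} is exact, $F^{eg}_*(f|_{S'})_*N$ is the cokernel of its left inclusion, and because the left square commutes while $\delta$ kills the image of $f_*\mathcal{O}_Y(\lru\A Y\rru-S')$, the composite $\delta\circ\alpha_e$ factors uniquely through this cokernel, defining $\beta_e$ and making the right square commute. Over the codimension-one points of $S'$, where $K_Y+\Xi$ is $\Q$-Cartier by Lemma \ref{l-Q-Cartier-on-model} so that adjunction and $K_{S'}+\Xi_{S'}=(K_Y+\Xi)|_{S'}$ are meaningful, one checks that $\beta_e$ coincides with the twisted Frobenius trace of $(S',\Xi_{S'})$, using the compatibility of the trace with the closed immersion $S'\inclusion Y$ (i.e.\ $F$-adjunction); this is the description invoked in the subsequent diagram with $\psi_e$. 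The main obstacle throughout is precisely the failure of $Y$ to be $\Q$-factorial: the divisors $\lru\A Y\rru$ and $K_Y+\Xi$ are genuinely non-Cartier, so neither the trace nor its twists can be treated as honest line-bundle maps globally, and the entire argument rests on building them on the big (Cartier) open locus and then transporting them by reflexivity, with Lemma \ref{l-Q-Cartier-on-model} supplying the $\Q$-Cartier control needed along $S'$.
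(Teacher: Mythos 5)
Your overall strategy (Grothendieck trace on $(Y,\Xi)$, twist by a line bundle on the Cartier locus, extend by reflexivity across a set of codimension $\>2$, push forward by $f_*$, and induce $\beta_e$ on cokernels) matches the paper's construction of $\alpha_e$ and $\beta_e$. However, your construction of the left-hand vertical arrow has a genuine gap. Twisting the trace $\phi^{eg}\colon F^{eg}_*\mathcal{O}_Y(L_{eg,\Xi})\to\mathcal{O}_Y$ by the divisor $\lru\A Y\rru-S'$ produces, via the projection formula, a map whose source is $F^{eg}_*\mathcal{O}_Y(L_{eg,\Xi}+p^{eg}\lru\A Y\rru-p^{eg}S')$, not the sheaf $F^{eg}_*L=F^{eg}_*\mathcal{O}_Y(L_{eg,\Xi}+p^{eg}\lru\A Y\rru-S')$ appearing in \eqref{L-diagram}: the twist multiplies $-S'$ by $p^{eg}$ inside the pushforward, so you only obtain a map on a strictly smaller subsheaf. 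Extending it to all of $F^{eg}_*L$ is precisely the nontrivial content of the lemma and does not follow from reflexivity, because the two sheaves already differ in codimension one along $S'$.

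What is needed is the $F$-compatibility of $S'$ with the trace of $(Y,\Xi)$: since $\lrd\Xi\rrd=S'$ is irreducible and $Y$ is normal, $S'$ is a center of sharp $F$-purity of $(Y,\Xi)$ on the smooth locus $U$, and \cite[5.1]{Schwede-can-lin} then yields a map $F^{eg}_*\mathcal{O}_U((L_{eg,\Xi}-S')|_U)\to\mathcal{O}_U(-S'|_U)$ carrying only one copy of $-S'$ on each side. Twisting \emph{that} map by $\lru\A Y\rru$ alone (rather than twisting the full trace by $\lru\A Y\rru-S'$) gives the correct source $L|_U$ and target $\mathcal{O}_U((\lru\A Y\rru-S')|_U)$, after which your reflexive-extension and $f_*$ steps go through. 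This same compatibility is also what makes the left square of \eqref{L-diagram} commute, so without it your definition of $\beta_e$ as the induced map on cokernels is not justified either; your closing appeal to ``$F$-adjunction'' gestures at the right fact, but it must be invoked to build the left arrow in the first place, not only to identify $\beta_e$ afterwards.
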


\begin{proof}
First $\alpha_e:F^{eg}_*f_*\mathcal{O}_Y(L_{eg,\Xi}+p^e\lru\A Y\rru) \to f_*\mathcal{O}_Y(\lru\A Y\rru)$ is defined naturally by the Grothendieck trace map (see \cite{BS}) followed by the twist of $\lru \A Y\rru$ (see Proposition \ref{rp}) and $f_*$.
\begingroup
\begin{equation*}
	\xymatrixrowsep{1.5cm}
	\xymatrixcolsep{0.5cm}
	\xymatrix{ 0 \ar[r] & F^{eg}_*f_*L \ar[r]  \ar[d] & F^{eg}_*f_*M \ar[r]^{\gamma_e}  \ar[d]_{\alpha_e} & F^{eg}_*(f|_{S'})_*N \ar[r] \ar[d]^{\beta_e} & 0\\
	0 \ar[r] & f_*\mathcal{O}_Y(\lru\A Y\rru-S') \ar[r] & f_*\mathcal{O}_Y(\lru\A Y\rru) \ar[r]^-\delta & (f|_{S'})_*\mathcal{O}_{S'}(\lru\A Y\rru |_{S'}) & }
\end{equation*}
\endgroup

	 To define the first vertical map we need to do some work. Let $U$ be the smooth locus of $Y$. Since $\lrd \Xi\rrd=S'$ is irreducible and $Y$ is normal, $S'|_U$ is a center of $F$-purity of $(U, \Xi|_U)$. Then by \cite[5.1]{Schwede-can-lin} there exists a map (following the Grothendieck trace map) 
	$$F^{eg}_*\mathcal{O}_U((L_{eg,\Xi}-S')|_U)\to \mathcal{O}_U(-S'|_U).$$
	Twisting this map by $\lru \A Y\rru|_U$ we get 
	$$ F^{eg}_*\mathcal{O}_U((L_{eg,\Xi}+p^{eg}\lru \A Y\rru-S')|_U)\to \mathcal{O}_U((\lru \A Y\rru-S')|_U).$$
	Since $\text{codim}_Y(Y-U)\>2$, this map extends (uniquely) to a map on $Y$:
	$$F^{eg}_*\mathcal{O}_Y(L_{eg,\Xi}+p^{eg}\lru \A Y\rru-S')\to \mathcal{O}_Y(\lru \A Y\rru-S')$$
	as all of the sheaves considered above are reflexive. Applying $f_*$ to this map we get our first vertical map.\\

	We define $\beta_e$ by diagram chasing. It is easy to see that $\beta_e$ is well defined.\\
	\end{proof}

\begin{corollary}\label{c-plt}
	With the same hypothesis as \tit{Theorem $\ref{t-inv-adj}$}, $(X, S+B)$ is \tit{plt} near $S$.
\end{corollary}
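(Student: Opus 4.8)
The plan is to read the corollary off the pure $F$-regularity already produced by Theorem \ref{t-inv-adj}, using the general principle that an $F$-regular pair has log terminal singularities. Theorem \ref{t-inv-adj} tells us that $S$ is normal, so that $S=S^n$ and adjunction along $S$ is genuine restriction, and that $(X,S+B)$ is purely $F$-regular in a neighbourhood of $S$; by the Remark following the definitions this pure $F$-regularity is exactly the divisorial $F$-regularity of \cite{HW}. It therefore suffices to invoke the forward implication ``divisorially (equivalently, purely) $F$-regular $\implies$ plt'' (see \cite{HW}, and also \cite{Hara}).

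I would assemble this from two observations. Away from $S$ the floor $\lrd S+B\rrd$ vanishes and every nonzero local multiplier is admissible, so pure $F$-regularity of $(X,S+B)$ restricts to strong $F$-regularity of $(X,B)$ on $X\setminus S$; by the forward implication this makes $(X,S+B)$ klt there, settling every divisor over $X$ whose centre is not contained in $S$. For a divisor $E\neq S$ whose centre does meet $S$, the same forward implication applied to the purely $F$-regular pair gives $a(E;X,S+B)>-1$. Since $\lrd S+B\rrd=S$ is reduced, irreducible, and normal with coefficient $1$, these two facts are precisely the definition of plt in a neighbourhood of $S$.

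The point I expect to require care is that the hypotheses of Theorem \ref{t-inv-adj} place no restriction on $\text{index}(K_X+S+B)$, while the dictionary between $F$-singularities and singularities of pairs is often recorded only when $p\nmid\text{index}$. The resolution is that I use strictly the \emph{forward} half of this dictionary, which is insensitive to the index: the splitting through $F^e_*\mathcal{O}_X(\lru(p^e-1)(S+B)\rru)$ yields, for all $e\gg 0$, a lower bound on each discrepancy controlled by $(p^e-1)(S+B)$ up to the bounded error introduced by $\lru\,\cdot\,\rru$, and after rescaling by $p^e$ and letting $e\to\infty$ this error washes out, leaving the strict inequality $a(E;X,S+B)>-1$. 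This is exactly where the argument separates from the reverse (and genuinely deep) implication, which is the direction that needs $p\nmid\text{index}$. One could instead try a purely geometric route, combining the klt-ness of $(S^n,B_{S^n})$ with the inversion of adjunction of Lemma \ref{l-inv-adj}; but plt constrains the discrepancies of divisors centred in every codimension, whereas Lemma \ref{l-inv-adj} handles only the codimension-two (surface) centres, so it is the $F$-splitting argument that disposes of all centres at once.
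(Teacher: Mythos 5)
Your proposal is correct and follows exactly the paper's route: the paper's proof of Corollary \ref{c-plt} is the one-line deduction that $(X,S+B)$ is purely $F$-regular near $S$ by Theorem \ref{t-inv-adj} and hence plt near $S$ by \cite[3.3]{HW}. Your additional discussion of why the forward implication ``purely $F$-regular $\implies$ plt'' is insensitive to $\text{index}(K_X+S+B)$ is a correct and worthwhile clarification, but it is elaboration on the same citation rather than a different argument.
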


\begin{proof}
	Since $(X, S+B)$ is purely $F$-regular near $S$ by Theorem $\ref{t-inv-adj}$, it is plt near $S$ by \cite[3.3]{HW}.
\end{proof}

\section{$F$-Different is not different from the Different}
\subsection{Some Definitions}
Let $(X, S+\Delta)$ be a pair, where $X$ is a $F$-finite normal scheme of pure dimension over a field $k$ of characteristic $p>0$ and $S+\Delta\>0$ is a $\Q$-divisor such that $(p^e-1)(K_X+S+\Delta)$ is Cartier for some $e>0$. Also assume that $S$ is a reduced Weil divisor, $S\wedge \Delta=0$ and $\nu: S^n\to S$ is the normalization morphism. Then by \cite[4.7]{LS} (also see \cite[8.2]{Schwede-F-adj}), there exists a canonically determined $\Q$-divisor $\Delta_{S^n}\>0$ on $S^n$ such that $\nu^*(K_X+S+\Delta)\sim_\Q K_{S^n}+\Delta_{S^n}$. 

\begin{definition}
 The divisor $\Delta_{S^n}\>0$ defined above is called the $F$-Different and it is denoted by $F\text{-Diff}_{S^n}(\Delta)$.  
\end{definition}

Let $(X, S+\Delta)$ be a pair as above. Then following the construction of \cite[Chapter 16]{Kollar-flips} or \cite[Definition 4.2]{Kollar} we see that there exists a canonically determined $\Q$-divisor $\Delta'_{S^n}\>0$ on $S^n$ such that $\nu^*(K_X+S+\Delta)\sim_{\Q} K_{S^n}+\Delta'_{S^n}$.
\begin{definition}
	The divisor $\Delta'_{S^n}$ defined above is called the Different and it is denoted by $\text{Diff}_{S^n}(\Delta)$.	
\end{definition}

We follow the definitions of the ideals $\tau_b(X; \Delta)$ and $\tau_b(X, \nsubseteq Q; \Delta)$ as in \cite{BSTZ}.

\subsection{Equality of $F$-Different and Different}
\begin{theorem}\label{t-F-Different}
Let $(X, S+\Delta\>0)$ be a pair, where $X$ is a $F$-finite normal excellent scheme of pure dimension over a field $k$ of characteristic $p>0$ and $S+\Delta\>0$ is a $\Q$-divisor on $X$ such that $(p^e-1)(K_X+S+\Delta)$ is Cartier for some $e>0$. Also assume that $S$ is a reduced Weil divisor and $S\wedge \Delta=0$. Then the $F$-Different,  $F\text{-Diff}_{S^n}(\Delta)$ is equal to the Different, $\text{Diff}_{S^n}(\Delta)$, i.e., $F\text{-Diff}_{S^n}(\Delta)=\text{Diff}_{S^n}(\Delta)$, where $S^n\to S$ is the normalization morphism.  
\end{theorem}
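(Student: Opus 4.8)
The plan is to reduce the statement to the case already settled by Schwede in \cite[7.2]{Schwede-F-adj} --- the case in which the relevant divisors are Cartier in codimension $2$ --- by passing to a regular birational model of a surface germ. The starting observation is that $F\text{-Diff}_{S^n}(\Delta)$ and $\text{Diff}_{S^n}(\Delta)$ are both effective $\Q$-Weil divisors on $S^n$ satisfying the \emph{same} crepant relation $\nu^*(K_X+S+\Delta)\sim_\Q K_{S^n}+(\,\cdot\,)$, and that each is manufactured by a procedure that is local on $S^n$ in codimension $1$. Hence it suffices to compare their coefficients along an arbitrary prime divisor $P$ on $S^n$. Fixing such a $P$ and putting $\p=\nu(P)$, the point $\p$ is a codimension $2$ point of $X$ lying on $S$; replacing $X$ by $X_\p=\text{Spec }\mathcal{O}_{X,\p}$ reduces the problem to computing both Differents on the normalization of the excellent surface germ $(X_\p,S_\p+\Delta_\p)$, both constructions being compatible with this localization.

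Next I would choose a log resolution $g:Y\map X_\p$ of $(X_\p,S_\p+\Delta_\p)$, which exists for excellent surfaces by the results recalled in Section $2$, arranging that $Y$ is regular and that the strict transform $S'$ of $S_\p$ is smooth. Since $S'$ and $S_\p$ are curves and $g|_{S'}:S'\map S_\p$ is proper birational, it is finite, and as $S'$ is normal it is the normalization, so $S'=S^n_\p$. Writing $K_Y+S'+\Delta_Y=g^*(K_{X_\p}+S_\p+\Delta_\p)$, one has that $(p^e-1)(K_Y+S'+\Delta_Y)=(p^e-1)g^*(K_{X_\p}+S_\p+\Delta_\p)$ is the pullback of a Cartier divisor, hence Cartier; moreover $S'\wedge\Delta_Y=0$ because $S\wedge\Delta=0$ and the total transform is simple normal crossing. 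As $Y$ is regular every Weil divisor on it is Cartier, so in particular all the divisors in sight are Cartier in codimension $2$, and \cite[7.2]{Schwede-F-adj} applies to give $F\text{-Diff}_{S'}(\Delta_Y)=\text{Diff}_{S'}(\Delta_Y)$.

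It then remains to transport this equality down along $g$. For the classical Different this is routine: the adjunction-restriction $(K_Y+S'+\Delta_Y)|_{S'}$ computes $K_{S'}+\text{Diff}_{S'}(\Delta_Y)$ on the one hand, while through the identification $g|_{S'}=\nu_\p$ it equals $\nu_\p^*(K_{X_\p}+S_\p+\Delta_\p)=K_{S^n_\p}+\text{Diff}_{S^n_\p}(\Delta)$; this crepant-pullback compatibility of $\text{Diff}$ is standard (\cite[Chapter 16]{Kollar-flips}, \cite[Definition 4.2]{Kollar}) and yields $\text{Diff}_{S^n}(\Delta)=\text{Diff}_{S'}(\Delta_Y)$ along $P$. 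Combining the three identities would finish the proof once the analogous invariance $F\text{-Diff}_{S^n}(\Delta)=F\text{-Diff}_{S'}(\Delta_Y)$ for the $F$-Different is in hand.

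The hard part will be precisely this last compatibility of the $F$-Different under the crepant pullback $g$. Because $F\text{-Diff}$ is built from the Frobenius/Grothendieck trace map together with the $S^n$-adjunction of \cite[4.7]{LS} rather than from a naive restriction of divisors, its behaviour under $g_*$ must be controlled via the compatibility of the trace map with proper pushforward (Grothendieck duality), and some care is needed because $\Delta_Y$ need not be effective along the $g$-exceptional locus, whereas the $F$-Different is set up for effective boundaries. Concretely I expect the real content to be a direct computation on the discrete valuation ring $\mathcal{O}_{S^n,P}$, showing that the coefficient of $F\text{-Diff}_{S^n}(\Delta)$ at $P$ is governed by the trace of $(p^e-1)(K_X+S+\Delta)$ in exactly the way that produces the classical correction term of $\text{Diff}_{S^n}(\Delta)$; this is also where one would cross-check against the restriction formulas for the ideals $\tau_b(X;\Delta)$ and $\tau_b(X,\nsubseteq Q;\Delta)$ of \cite{BSTZ}, \cite{Takagi-plt}. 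Once this birational invariance of the $F$-Different is established, the three displayed equalities give $F\text{-Diff}_{S^n}(\Delta)=\text{Diff}_{S^n}(\Delta)$ along $P$, and since $P$ was arbitrary the theorem follows.
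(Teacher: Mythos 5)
Your overall strategy coincides with the paper's: localize at a codimension $1$ point of $S^n$ to reduce to an excellent surface germ, pass to a log resolution on which the strict transform $\widetilde{S}$ of $S$ is the normalization $S^n$, invoke \cite[7.2]{Schwede-F-adj} on the regular model, and use the standard crepant-pullback compatibility of the classical Different. But there is a genuine gap: the birational invariance $F\text{-Diff}_{S^n}(\Delta)=F\text{-Diff}_{\widetilde{S}}(\Delta_Y)$, which you yourself single out as ``the hard part,'' is never proved --- you only describe what you \emph{expect} the computation to look like. Since everything else in the argument is either formal or already in the literature, this identity is the entire content of the theorem, and it cannot be waved through: the $F$-Different is not defined by restricting divisors but by restricting a $p^{-e}$-linear map, and the locus where the comparison is nontrivial (the finitely many closed points of $S^n$ supporting either Different) is exactly the locus where neither $\pi$ nor $\nu$ is an isomorphism.

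The way the paper closes this gap is concrete and worth recording. The effective divisor $S+\Delta$ corresponds via \cite[4.1]{BS} to a map $\varphi: F^e_*\mathscr{L}\to\mathcal{O}_X$ with $\mathscr{L}=\mathcal{O}_X((1-p^e)(K_X+S+\Delta))$, and \cite[7.2.1]{BS} transforms $\varphi$ into a map $\varphi_Y: F^e_*\pi^*\mathscr{L}\to\mathscr{K}(Y)$ agreeing with $\varphi$ wherever $\pi$ is an isomorphism. One then checks from an explicit factorization that $\widetilde{S}$ is $\varphi_Y$-compatible, so $\varphi_Y$ induces $\overline{\varphi}_Y$ on $\widetilde{S}$; the key point --- and this is where effectivity of $\Delta$ enters, answering the worry you raise about $\Delta_Y$ failing to be effective along the exceptional locus --- is that $\Delta_Y|_{\widetilde{S}}=\text{Diff}_{\widetilde{S}}(\Delta)\geq 0$, which forces $\overline{\varphi}_Y$ to land in $\mathcal{O}_{\widetilde{S}}$ and also lets one shrink to a neighborhood of $\widetilde{S}$ on which $\Delta_Y\geq 0$ so that \cite[7.2]{Schwede-F-adj} is actually applicable. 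The resulting commutative square identifying $F^e_*\mathscr{L}|_{S^n}\to\mathcal{O}_{S^n}$ with $\overline{\varphi}_Y$ under $S^n=\widetilde{S}$ (two maps between the same torsion-free sheaves on a normal curve agreeing on a dense open set) is precisely the assertion $F\text{-Diff}_{S^n}(\Delta)=F\text{-Diff}_{\widetilde{S}}(\Delta_Y)$. No appeal to Grothendieck duality for $\pi$ or to the $\tau_b$ restriction theorems of \cite{BSTZ} is needed; until you supply an argument of this kind, the proof is incomplete.
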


\begin{proof}
First observe that $F\text{-Diff}_{S^n}(\Delta)$ and $\text{Diff}_{S^n}(\Delta)$ are both divisors on $S^n$, so it is enough to prove that they are equal at all codimension $1$ points of $S^n$. Since the codimension $1$ points of $S^n$ lie over the condimension $1$ points $S$, by localizing $X$ at a codimension $1$ point of $S$ we can assume that $X$ is an excellent surface.\\

Since $S+\Delta\>0$ and $\mathscr{L}=\mathcal{O}_X((1-p^e)(K_X+S+\Delta))$ is a line bundle, by \cite[4.1]{BS} $S+\Delta$ induces a map 
\begin{equation}\label{defphi}
\varphi: F^e_*\mathscr{L}\to \mathcal{O}_X.
\end{equation}
Let $\pi: Y\to X$ be a log resolution of $(X, S+\Delta)$ (log resolution exists for excellent surfaces by \cite{Abhyankar} and \cite{Hironaka}, also see \cite{Lipman}) such that $\pi^{-1}_* S=\widetilde{S}$ is smooth and 
\begin{equation}\label{logres}
K_Y+\widetilde{S}+\Delta_Y=\pi^*(K_X+S+\Delta).
\end{equation}
Then by \cite[7.2.1]{BS}, we have a morphism 
\begin{equation}\label{defphiy}
\varphi_Y: F^e_*\pi^*\mathscr{L}\to \mathscr{K}(Y)
\end{equation}
where $\mathscr{K}(Y)$ is the constant sheaf of rational functions on $Y$, such that $\varphi_Y$ agrees with $\varphi$ wherever $\pi$ is an isomorphism.\\

Let $\Delta_Y=\widetilde{\Delta}+\Sigma a_iE_i$, where $\widetilde{\Delta}$ is the strict transform of $\Delta$ and $E_i$'s are the exceptional divisors of $\pi$. Then we can factor $\varphi_Y$ in the following way
\begingroup
\fontsize{10pt}{12pt}\selectfont
\begin{equation}\label{phiyfac}
F^e_*\mathcal{O}_Y((1-p^e)(K_Y+\widetilde{S}+\widetilde{\Delta}+\Sigma a_iE_i))\sbst F^e_*\mathcal{O}_Y((1-p^e)(K_Y+\Sigma a_iE_i))\to \mathscr{K}(Y).
\end{equation}
\endgroup 
Let $N\>0$ be a sufficiently large Cartier divisor on $Y$ such that 
\begin{equation}\label{defN}
\varphi_Y(F^e_*\mathcal{O}_Y((1-p^e)(K_Y+\Sigma a_iE_i)))\sbst \mathcal{O}_Y(N).	
\end{equation}
Then from \eqref{phiyfac} we have
\begin{equation}\label{redefphiy}
\varphi_Y : F^e_*\pi^*\mathscr{L}\to \mathcal{O}_Y(N).
\end{equation}
We see that $\widetilde{S}$ is $\varphi_Y$-compatible in the following way
\begingroup
\fontsize{10pt}{12pt}\selectfont
\begin{equation}\label{comptS}
F^e_*\mathcal{O}_Y((1-p^e)(K_Y+\widetilde{S}+\Delta_Y)-\widetilde{S})\sbst F^e_*\mathcal{O}_Y((1-p^e)(K_Y+\Sigma a_iE_i)-p^e\widetilde{S})\to \mathcal{O}_Y(N-\widetilde{S}). 
\end{equation}
\endgroup
Thus we get the following induced morphism on $\widetilde{S}$ ($cf.$ \cite[6.0.3]{BS})
\begin{equation}\label{resS}
\overline{\varphi}_Y: F^e_*\pi^*\mathscr{L}|_{\widetilde{S}}\to \mathcal{O}_{\widetilde{S}}(N|_{\widetilde{S}}).
\end{equation} 
Since $\pi|_{\widetilde{S}} : \widetilde{S}\to S$ is the normalization morphism, by \cite[4.7]{Kollar} we have
\begin{equation}\label{comdiff}
K_{\widetilde{S}}+\Delta_Y|_{\widetilde{S}} \sim_{\Q} K_{\widetilde{S}}+\text{ Diff}_{\widetilde{S}}(\Delta) \text{ and } \text{ Diff}_{\widetilde{S}}(\Delta)=\Delta_Y|_{\widetilde{S}}.
\end{equation} 
Since $\text{ Diff}_{\widetilde{S}}(\Delta)\>0$ as $\Delta\>0$, from \eqref{comdiff} we get that $\Delta_Y|_{\widetilde{S}}\>0$. This implies that $\overline{\varphi}_Y(F^e_*\pi^*\mathscr{L}|_{\widetilde{S}})\sbst \mathcal{O}_{\widetilde{S}}$, since we have following factorization of $\overline{\varphi}_Y$:
\begin{equation}
F^e_*\pi^*\mathscr{L}|_{\widetilde{S}}=F^e_*\mathcal{O}_{\widetilde{S}}((1-p^e)(K_{\widetilde{S}}+\Delta_Y|_{\widetilde{S}}))\sbst F^e_*\mathcal{O}_{\widetilde{S}}((1-p^e)(K_{\widetilde{S}})) \to \mathcal{O}_{\widetilde{S}}.
\end{equation}
Thus we get the following commutative diagram: 
\begin{displaymath}
\xymatrix{ F^e_*\mathscr{L}|_{S^n}\ar[r] \ar@{=}[d] & \mathcal{O}_{S^n} \ar@{=}[d]\\
 F^e_* \pi^*\mathscr{L}|_{\widetilde{S}} \ar[r]^{\hspace{6mm}\overline{\varphi}_Y} & \mathcal{O}_{\widetilde{S}} }
\end{displaymath}
since $S^n=\widetilde{S}$.\\

Now form the commutative diagram above we get that $F\text{-Diff}_{S^n}(\Delta)=F\text{-Diff}_{\widetilde{S}}(\Delta_Y)$.\\
Since $\Delta_Y|_{\widetilde{S}}\>0$, by working locally on a neighborhood of $\widetilde{S}$ we can assume that $\Delta_Y\>0$. Since $Y$ is smooth and $\widetilde{S}+\Delta_Y\>0$ has simple normal crossing support, by \cite[7.2]{Schwede-F-adj} $F\text{-Diff}_{\widetilde{S}}(\Delta_Y)=\Delta_Y|_{\widetilde{S}}$. But $\Delta_Y|_{\widetilde{S}}=\text{ Diff}_{S^n}(\Delta)$ by \eqref{comdiff}. Therefore $F\text{-Diff}_{S^n}(\Delta)=\text{Diff}_{S^n}(\Delta)$.
\end{proof}

\begin{corollary}\label{c-inv-adj}
Let $(X, S+B)$ be a pair, where $X$ is a normal variety, $S+B\>0$ is a $\Q$-divisor, $K_X+S+B$ is $\Q$-Cartier and $\lrd S+B \rrd=S$ is reduced and irreducible. Let $\nu:S^n\to S$ be the normalization morphism, write $(K_X+S+B)|_{S^n}=K_{S^n}+B_{S^n}$. If $(S^n, B_{S^n})$ is strongly $F$-regular then $S$ is normal and $(X, S+B)$ is purely $F$-regular near $S$.	
\end{corollary}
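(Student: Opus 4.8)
The plan is to deduce this second, purely characteristic-$p$ proof from the equality of the two Differents established in Theorem \ref{t-F-Different}, combined with the restriction theorem for generalized test ideals of Takagi \cite[Theorem 4.4]{Takagi-plt}; the latter was proved under the hypothesis that the $F$-Different and the Different coincide, which is exactly what Theorem \ref{t-F-Different} now guarantees. Since the statement is local near $S$, I would first reduce to the case $X=\text{Spec }R$ affine and localize near the generic points of $S$, writing $Q\sbst R$ for the prime ideal defining $S$ and using the ideals $\tau_b(X;\Delta)$ and $\tau_b(X,\nsubseteq Q;\Delta)$ as in \cite{BSTZ}. A strongly $F$-regular pair is klt, so $(S^n,B_{S^n})$ is klt and Corollary \ref{c-normality} already shows that $S$ is normal in codimension $1$; this makes the geometric Different $\text{Diff}_{S^n}(B)=B_{S^n}$ well defined along the generic points of $S$, so that the restriction formula is meaningful there.

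A preliminary reduction handles the index. The ideals $\tau_b$ and the $F$-Different are defined only when $(p^e-1)(K_X+S+B)$ is Cartier for some $e$, whereas the corollary allows the index of $K_X+S+B$ to be divisible by $p$. I would therefore, exactly as in the first proof, invoke \cite[2.13]{HX} (using that $X$ is affine) to choose an effective $\Q$-Cartier divisor $D\>0$ with $S\nsubseteq\text{Supp }D$ and $p\nmid\text{index}(K_X+S+B+D)$, chosen small enough that $(S^n,B_{S^n}+D|_{S^n})$ remains strongly $F$-regular by openness of strong $F$-regularity. Since pure $F$-regularity of $(X,S+B+D)$ implies pure $F$-regularity of $(X,S+B)$ (a splitting for the larger boundary restricts to one for the smaller, and $\lrd S+B+D\rrd=\lrd S+B\rrd=S$), it suffices to prove the corollary for $(X,S+B+D)$; I rename this pair $(X,S+B)$ and assume from now on that $p\nmid\text{index}(K_X+S+B)$.

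The core of the argument is the adjunction for test ideals. Applying \cite[Theorem 4.4]{Takagi-plt}, and using Theorem \ref{t-F-Different} to identify the $F$-Different produced by the Frobenius restriction with the geometric Different $B_{S^n}$ appearing in the hypothesis, I obtain
$$\tau_b(X,\nsubseteq Q;\, S+B)\cdot\mathcal{O}_{S^n}=\tau\big(S^n,\,\text{Diff}_{S^n}(B)\big)=\tau\big(S^n,\,B_{S^n}\big).$$
Because $(S^n,B_{S^n})$ is strongly $F$-regular, the right-hand side is $\mathcal{O}_{S^n}$. The restriction on the left factors through $\mathcal{O}_S\inclusion\nu_*\mathcal{O}_{S^n}$, so its surjectivity onto $\nu_*\mathcal{O}_{S^n}$ forces $\mathcal{O}_S\to\nu_*\mathcal{O}_{S^n}$ to be surjective; as this map is always injective for reduced $S$, it is an isomorphism and $S=S^n$ is normal. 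The same triviality gives $\tau_b(X,\nsubseteq Q;\,S+B)=\mathcal{O}_X$ in a neighborhood of $S$, which is precisely the assertion that $(X,S+B)$ is purely $F$-regular near $S$.

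The step I expect to be the main obstacle, and the reason Theorem \ref{t-F-Different} is indispensable here, is the identification inside the adjunction formula: the Frobenius-theoretic restriction of $\tau_b(X,\nsubseteq Q;S+B)$ naturally yields a test ideal computed against the $F$-Different, while the hypothesis of strong $F$-regularity is phrased via the pair $(S^n,B_{S^n})$ built from the geometric Different by adjunction. Without the equality $F\text{-Diff}_{S^n}(B)=\text{Diff}_{S^n}(B)$ the two sides cannot be matched, and indeed this is the precise gap in \cite[Theorem 4.4]{Takagi-plt} that Theorem \ref{t-F-Different} closes. A secondary difficulty is the reflexive-sheaf bookkeeping, via Proposition \ref{rp} and the finiteness of $\nu$, needed to pass rigorously from the triviality of the restricted test ideal to the splitting statement defining pure $F$-regularity, and to verify that the chosen perturbation $D$ simultaneously preserves strong $F$-regularity downstairs and leaves $\lrd S+B\rrd=S$ unchanged.
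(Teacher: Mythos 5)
Your overall strategy is the paper's own second proof: reduce to the affine case, perturb so that the index of $K_X+S+B$ is prime to $p$ while keeping the restricted pair strongly $F$-regular, and then combine Theorem \ref{t-F-Different} with a restriction theorem for divisorial test ideals to conclude triviality of $\tau_b(X,\nsubseteq Q;S+B)$ near $S$. The paper cites \cite[3.15]{BSTZ} together with \cite[3.4]{Takagi-plt} rather than \cite[Theorem 4.4]{Takagi-plt}, but the substance is the same, and you correctly identified that the equality $F\text{-Diff}_{S^n}(B)=\text{Diff}_{S^n}(B)$ is the pivot that lets the Frobenius-theoretic restriction be matched against the hypothesis on $(S^n,B_{S^n})$.

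There is, however, one genuine gap: your derivation of the normality of $S$. The equality $\tau_b(X,\nsubseteq Q;S+B)\cdot\mathcal{O}_{S^n}=\mathcal{O}_{S^n}$ concerns the ideal of $\mathcal{O}_{S^n}$ \emph{generated by} the image of $\tau_b(X,\nsubseteq Q;S+B)$, not the image of an $\mathcal{O}_X$-module map onto $\nu_*\mathcal{O}_{S^n}$; that image lies inside $\mathcal{O}_S$ and cannot equal $\nu_*\mathcal{O}_{S^n}$ unless $S$ is already normal. What the equality actually yields (via lying-over for the finite morphism $\nu$) is that the image ideal in $\mathcal{O}_S$ is the unit ideal, hence $\tau_b(X,\nsubseteq Q;S+B)=\mathcal{O}_X$ near $S$, i.e.\ pure $F$-regularity --- but not, by itself, surjectivity of $\mathcal{O}_S\to\nu_*\mathcal{O}_{S^n}$. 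Normality requires a separate argument, and the paper supplies it \emph{before} invoking any restriction theorem: by \cite[8.2]{Schwede-F-adj} the conductor ideal of $\nu$ is $F$-compatible for $(S^n,F\text{-Diff}_{S^n}(B))=(S^n,B_{S^n})$ (this is where Theorem \ref{t-F-Different} enters), and a strongly $F$-regular pair admits no proper nontrivial compatible ideal by \cite[4.6, 4.8, 4.10]{Schwede-cpurity}, so the conductor is trivial and $S=S^n$; only then is the restriction theorem applied to the now-normal $S$. Your argument becomes correct if you insert this conductor step, or deduce normality from the pure $F$-regularity you have already established by the same mechanism; as written, the sentence claiming normality does not follow from what precedes it.
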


\begin{proof}
	The question is local on the base, thus we can assume that $X=\text{Spec }R$. Let $D'$ be an effective Weil divisor on $X$ such that $D'-K_X$ is Cartier. Let $S'$ be another effective Weil divisor on $X$ such that $S'\sim S$ but $S'$ does not contain $S$. Let $D=\frac{S'+B+D'}{p^e-1} \>0$ for $e\gg 0$. $D$ is an effective $\Q$-Cartier divisor. Then $K_X+S+B+D$ is a $\Q$-Cartier divisor with index not divisible $p$ and $\lrd S+B+D\rrd=S$. Then index of $K_{S^n}+B_{S^n}+D|_{S^n}$ is also not divisible by $p$, where $(K_X+S+B+D)|_{S^n}=K_{S^n}+B_{S^n}+D|_{S^n}$. Choosing $e\gg 0$ we can assume that $(S^n, B_{S^n}+D|_{S^n})$ is strongly $F$-regular. Therefore we are reduced to the case where the indexes of $K_X+S+B$ and $K_{S^n}+B_{S^n}$ are both not divisible by $p$.\\
	
Since by Theorem \ref{t-F-Different}, $B_{S^n}=F\text{-Diff}_{S^n}(B)$ and $(S^n, B_{S^n})$ is strongly $F$-regular, $(S^n, B_{S^n})$ has no proper non trivial center of $F$-purity by \cite[4.6]{Schwede-cpurity}. Let $J$ be the conductor of the normalization $S^n\to S$. Then by \cite[8.2]{Schwede-F-adj}, $J$ is $F$-compatible with respect to $(S^n, B_{S^n})$. If $J\neq A$ then by \cite[4.10]{Schwede-cpurity} and \cite[4.8]{Schwede-cpurity}, we arrive at a contradiction. Thus $S\cong S^n$, i.e., $S$ is normal.\\
	
Let $Q$ be the generic point of $S$. Then by \cite[3.15]{BSTZ}, $\tau_b(X,\nsubseteq Q; S+B)|_S=\tau_b(S; B_S)$. Since $K_S+B_S$ is $\Q$-Cartier, $\tau_b(S; B_S)=\tau_b (S; B_S)$ by \cite[3.7]{BSTZ}. Therefore $\tau_b(X,\nsubseteq Q; S+B)|_S=\tau_b (S; B_S)$. Since $(S, B_S)$ is strongly $F$-regular, $\tau_b (S; B_S)=\mathcal{O}_S$. Thus $\tau_b(X,\nsubseteq Q; S+B)|_S=\mathcal{O}_S$, hence $(X, S+B)$ is purely $F$-regular near $S$ (see \cite[3.4]{Takagi-plt}).
	
\end{proof}

\bibliographystyle{alpha}
\bibliography{references.bib}	
\end{document}